\newtheorem{theorem}{Theorem}[section]
\newtheorem{lemma}[theorem]{Lemma}
\newtheorem{definition}[theorem]{Definition}
\newcommand{\Prop}{\mathsf{Prop}}
\newcommand{\val}[1]{[\![{#1}]\!]}
\newcommand{\descr}[1]{(\![{#1}]\!)}
\renewcommand{\phi}{\varphi}
\newcolumntype{C}[1]{>{\centering\arraybackslash}p{#1}}
\newcolumntype{L}[1]{>{\arraybackslash}p{#1}}
\title{\bf The logic of vague categories}
\author{Willem Conradie$^{a}$, Alessandra Palmigiano$^{b,c}$, Claudette Robinson$^{c}$,\\ 
	Apostolos Tzimoulis$^{b}$, Nachoem M.~Wijnberg$^{d,e}$\\ 
$^a$School of Mathematics, University of the Witwatersrand \\
$^{b}$School of Business and Economics, Vrije Universiteit, Amsterdam\\
$^c$Department of Pure and Applied Mathematics, University of Johannesburg\\
$^d$ Amsterdam Business School, University of Amsterdam\\
$^e$ College of Business and Economics,  University of Johannesburg
} 
\begin{document}
\maketitle
\begin{abstract}
We introduce a complete many-valued semantics for basic normal lattice-based modal logic. This  relational semantics is grounded on many-valued formal contexts from Formal Concept Analysis. We discuss an interpretation and possible applications of this logical framework for categorization theory to the formal analysis of multi-market competition.\\
{\bf Keywords:} Non-distributive modal logic, Polarity-based semantics,  Formal Concept Analysis, Rough concepts, Categorization theory, Multi-market competition.
\end{abstract}

\section{Introduction} 
This paper pertains to a line of research on the semantics of normal LE-logics \cite{CoPa:non-dist, conradie2016constructive}, which are the logics canonically associated with varieties of normal lattice expansions (LEs) \cite{gehrke2001bounded}, (i.e.~general, not necessarily distributive, lattices endowed with extra operations of any arity each of which is coordinatewise either finitely join-preserving/meet-reversing, or finitely meet-preserving/join-reversing).  Thanks to duality theory and  canonical extensions, basic normal LE-logics of arbitrary signatures and  a large class of their axiomatic extensions can be uniformly endowed with complete relational semantics of different kinds, of which  those of interest to the present paper are relational structures based on {\em formal contexts} (aka {\em polarities}) \cite{gehrke2006generalized, galatos2013residuated, conradie2016categories,Tarkpaper, greco2018algebraic}. 
Polarity-based structures have yielded important theoretical contributions in the algebraic proof theory \cite{greco2018algebraic} and in the model theory \cite{conradie2018goldblatt} of LE-logics, and have also contributed to illuminate the conceptual significance  
of LE-logics, thereby breaking the ground for novel applications. Specifically, in \cite{conradie2016categories}, the basic non-distributive modal logic and some of its axiomatic extensions are interpreted as  {\em epistemic logics of categories and concepts}, and in \cite{Tarkpaper}, the corresponding `common knowledge'-type construction is used to give an epistemic-logical formalization of the notion of {\em prototype} of a category; in \cite{roughconcepts, ICLA2019paper}, polarity-based semantics for non-distributive modal logic is proposed as an encompassing framework for the integration of rough set theory \cite{pawlak} and formal concept analysis \cite{ganter2012formal}, and in this context, the basic non-distributive modal logic is interpreted as  the logic of {\em rough concepts}. Other different but related semantics of the same logic have been introduced and explored in \cite{conradie2015relational, graph-based-wollic}, also regarding the many-valued semantic setting  \cite{eusflat, socio-political}.   

In this paper, we pursue  the mathematical and conceptual investigation of  the many-valued polarity-based semantics for the basic modal non-distributive logic adumbrated in \cite[Section 7.2]{roughconcepts}. Specifically, the main technical contribution of the present paper is the proof that the basic non-distributive modal logic is complete w.r.t.~the class of many-valued enriched formal contexts.  The main conceptual contribution is the discussion of
the potential of this logical framework to provide a formal ground on which to build the theory of multi-market competition in management science.   
\section{Preliminaries}
This section adapts material from \cite[Section 2.1]{graph-based-wollic},  \cite[Section 7.2]{roughconcepts}, \cite[Sections 3 and 4]{eusflat} and \cite[Sections 3 and 4]{socio-political}.
\subsection{Basic normal nondistributive modal logic}
\label{sec:logics}
Let $\Prop$ be a (countable or finite) set of atomic propositions. The language $\mathcal{L}$ of the {\em basic normal nondistributive modal logic} is defined as follows:
\[ \varphi := \bot \mid \top \mid p \mid  \varphi \wedge \varphi \mid \varphi \vee \varphi \mid \Box \varphi \mid  \Diamond\varphi,\] 
where $p\in \Prop$. 
The {\em basic}, or {\em minimal normal} $\mathcal{L}$-{\em logic} is a set $\mathbf{L}$ of sequents $\phi\vdash\psi$  with $\phi,\psi\in\mathcal{L}$, containing the following axioms:
		{\small{
			\begin{align*}
				&p\vdash p, && \bot\vdash p, && p\vdash \top, & &  &\\
				&p\vdash p\vee q, && q\vdash p\vee q, && p\wedge q\vdash p, && p\wedge q\vdash q, &\\
				& \top\vdash \Box \top, && \Box p\wedge \Box q \vdash \Box ( p\wedge q),
           && \Diamond \bot\vdash \bot, &&
                \Diamond ( p \vee q) \vdash \Diamond p \vee \Diamond  q &
			\end{align*}
			}}
	%
	%
		and closed under the following inference rules:
		{\small{
		\begin{displaymath}
			\frac{\phi\vdash \chi\quad \chi\vdash \psi}{\phi\vdash \psi}
			\quad
			\frac{\phi\vdash \psi}{\phi\left(\chi/p\right)\vdash\psi\left(\chi/p\right)}
			\quad
			\frac{\chi\vdash\phi\quad \chi\vdash\psi}{\chi\vdash \phi\wedge\psi}
			\quad
			\frac{\phi\vdash\chi\quad \psi\vdash\chi}{\phi\vee\psi\vdash\chi}
			\end{displaymath}
			\begin{displaymath}
			\frac{\phi\vdash\psi}{\Box \phi\vdash \Box \psi}
\quad
\frac{\phi\vdash\psi}{\Diamond \phi\vdash \Diamond \psi}
		\end{displaymath}
		}}
An {\em $\mathcal{L}$-logic} is any  extension of $\mathbf{L}$  with $\mathcal{L}$-axioms $\phi\vdash\psi$.
	
\subsection{Many-valued enriched formal contexts}
Throughout this paper,  we let $\mathbf{A} = (D, 1, 0, \vee, \wedge, \otimes, \to)$ denote an arbitrary but fixed 
complete frame-distributive and dually frame-distributive, 
commutative and associative residuated lattice  (understood as the algebra of truth-values) such that $1\to \alpha = \alpha$ for every $\alpha\in D$.
For every set $W$, an $\mathbf{A}$-{\em valued subset}  (or $\mathbf{A}$-{\em subset}) of $W$ is a map $u: W\to \mathbf{A}$.  We let $\mathbf{A}^W$ denote the set of all $\mathbf{A}$-subsets. Clearly, $\mathbf{A}^W$ inherits the algebraic structure of $\mathbf{A}$ by defining the operations and the order pointwise. The $\mathbf{A}$-{\em subsethood} relation between elements of $\mathbf{A}^W$ is the map $S_W:\mathbf{A}^W\times \mathbf{A}^W\to \mathbf{A}$ defined as $S_W(f, g) :=\bigwedge_{z\in W }(f(z)\rightarrow g(z)) $. For every $\alpha\in \mathbf{A}$, 
let $\{\alpha/ w\}: W\to \mathbf{A}$ be defined by $v\mapsto \alpha$ if $v = w$ and $v\mapsto \bot^{\mathbf{A}}$ if $v\neq w$. Then, for every $f\in \mathbf{A}^W$,
\begin{equation}\label{eq:MV:join:generators}
f = \bigvee_{w\in W}\{f(w)/ w\}.
\end{equation}
 When $u, v: W\to \mathbf{A}$ and $u\leq v$ w.r.t.~the pointwise order, we write $u\subseteq v$.
An $\mathbf{A}$-{\em valued relation} (or $\mathbf{A}$-{\em relation}) is a map $R: U \times W \rightarrow \mathbf{A}$. Two-valued relations can be regarded as  $\mathbf{A}$-relations. In particular for any set $Z$, we let $\Delta_Z: Z\times Z\to \mathbf{A}$ be defined by $\Delta_Z(z, z') = \top$ if $z = z'$ and $\Delta_Z(z, z') = \bot$ if $z\neq z'$. 
Any $\mathbf{A}$-valued relation $R: U \times W \rightarrow \mathbf{A}$ induces  maps $R^{(0)}[-] : \mathbf{A}^W \rightarrow \mathbf{A}^U$ and $R^{(1)}[-] : \mathbf{A}^U \rightarrow \mathbf{A}^W$ defined as follows: for every $f: U \to \mathbf{A}$ and every $u: W \to \mathbf{A}$,
\begin{center}
	\begin{tabular}{r l }
$R^{(1)}[f]:$ & $ W\to \mathbf{A}$\\
		
		& $ x\mapsto \bigwedge_{a\in U}(f(a)\rightarrow R(a, x))$\\
	&\\	
 $R^{(0)}[u]: $ & $U\to \mathbf{A} $\\
		& $a\mapsto \bigwedge_{x\in W}(u(x)\rightarrow R(a, x))$\\
	\end{tabular}
\end{center}

A {\em formal}  $\mathbf{A}$-{\em context}\footnote{ In the crisp setting, a {\em formal context} \cite{ganter2012formal}, or {\em polarity},  is a structure $\mathbb{P} = (A, X, I)$ such that $A$ and $X$ are sets, and $I\subseteq A\times X$ is a binary relation. Every such $\mathbb{P}$ induces maps $(\cdot)^\uparrow: \mathcal{P}(A)\to \mathcal{P}(X)$ and $(\cdot)^\downarrow: \mathcal{P}(X)\to \mathcal{P}(A)$, respectively defined by the assignments $B^\uparrow: = I^{(1)}[B]$ and $Y^\downarrow: = I^{(0)}[Y]$. A {\em formal concept} of $\mathbb{P}$ is a pair 
$c = (\val{c}, \descr{c})$ such that $\val{c}\subseteq A$, $\descr{c}\subseteq X$, and $\val{c}^{\uparrow} = \descr{c}$ and $\descr{c}^{\downarrow} = \val{c}$.   The set $L(\mathbb{P})$  of the formal concepts of $\mathbb{P}$ can be partially ordered as follows: for any $c, d\in L(\mathbb{P})$, \[c\leq d\quad \mbox{ iff }\quad \val{c}\subseteq \val{d} \quad \mbox{ iff }\quad \descr{d}\subseteq \descr{c}.\]
With this order, $L(\mathbb{P})$ is a complete lattice, the {\em concept lattice} $\mathbb{P}^+$ of $\mathbb{P}$. Any complete lattice $\mathbb{L}$ is isomorphic to the concept lattice $\mathbb{P}^+$ of some polarity $\mathbb{P}$.} or $\mathbf{A}$-{\em polarity} (cf.~\cite{belohlavek}) is a structure $\mathbb{P} = (A, X, I)$ such that $A$ and $X$ are sets and $I: A\times X\to \mathbf{A}$. Any formal $\mathbf{A}$-context induces  maps $(\cdot)^{\uparrow}: \mathbf{A}^A\to \mathbf{A}^X$ and $(\cdot)^{\downarrow}: \mathbf{A}^X\to \mathbf{A}^A$ given by $(\cdot)^{\uparrow} = I^{(1)}[\cdot]$ and $(\cdot)^{\downarrow} = I^{(0)}[\cdot]$. 
%
These maps are such that, for every $f\in \mathbf{A}^A$ and every $u\in \mathbf{A}^X$,
\[S_A(f, u^{\downarrow}) = S_X(u, f^{\uparrow}),\]
that is, the pair of maps $(\cdot)^{\uparrow}$ and $(\cdot)^{\downarrow}$ form an $\mathbf{A}$-{\em Galois connection}.
In \cite[Lemma 5]{belohlavek}, it is shown that every  $\mathbf{A}$-Galois connection arises from some formal  $\mathbf{A}$-context. A {\em formal}  $\mathbf{A}$-{\em concept} of $\mathbb{P}$ is a pair $(f, u)\in \mathbf{A}^A\times \mathbf{A}^X$ such that $f^{\uparrow} = u$ and $u^{\downarrow} = f$. It follows immediately from this definition that if $(f, u)$ is a formal $\mathbf{A}$-concept, then $f^{\uparrow \downarrow} = f$ and $u^{\downarrow\uparrow} = u$, that is, $f$ and $u$ are {\em stable}. The set of formal $\mathbf{A}$-concepts can be partially ordered as follows:
\[(f, u)\leq (g, v)\quad \mbox{ iff }\quad f\subseteq g \quad \mbox{ iff }\quad v\subseteq u. \]
Ordered in this way, the set of the formal  $\mathbf{A}$-concepts of $\mathbb{P}$ is a complete lattice, which we denote $\mathbb{P}^+$. 

	An {\em enriched formal $\mathbf{A}$-context} (cf.~\cite[Section 7.2]{roughconcepts}) is a structure  $\mathbb{F} = (\mathbb{P}, R_\Box, R_\Diamond)$ such that $\mathbb{P} = (A, X, I)$ is a formal  $\mathbf{A}$-context and $R_\Box: A\times X\to \mathbf{A}$ and $R_\Diamond: X\times A\to \mathbf{A}$ are $I$-{\em compatible}, i.e.~$R_{\Box}^{(0)}[\{\alpha / x\}]$, $R_{\Box}^{(1)}[\{\alpha / a\}]$,  $R_{\Diamond}^{(0)}[\{\alpha / a\}]$ and $R_{\Diamond}^{(1)}[\{\alpha / x\}]$ are stable for every $\alpha \in \mathbf{A}$, $a \in A$ and $x \in X$.   
	The {\em complex algebra} of an  enriched formal $\mathbf{A}$-context $\mathbb{F} = (\mathbb{P}, R_\Box, R_\Diamond)$ is the algebra $\mathbb{F}^{+} = (\mathbb{P}^{+}, [R_{\Box}], \langle R_{\Diamond} \rangle )$ where $[R_{\Box}], \langle R_{\Diamond} \rangle : \mathbb{P}^{+} \to \mathbb{P}^{+}$ are defined by the following assignments: for every $c = (\val{c}, \descr{c}) \in \mathbb{P}^{+}$, 
	\begin{center}
	\begin{tabular}{l cl}
		$[R_{\Box}]c$ & $ =$&$  (R_{\Box}^{(0)}[\descr{c}], (R_{\Box}^{(0)}[\descr{c}])^{\uparrow})$\\
		$ \langle R_{\Diamond} \rangle c $ & $ =$&$  ((R_{\Diamond}^{(0)}[\val{c}])^{\downarrow}, R_{\Diamond}^{(0)}[\val{c}])$.\\
	\end{tabular}
	\end{center}

\begin{lemma}\label{prop:fplus}
(cf.~\cite[Lemma 15]{roughconcepts}) If $\mathbb{F} = (\mathbb{X}, R_{\Box}, R_{\Diamond})$	is an enriched formal   $\mathbf{A}$-context, $\mathbb{F}^+ = (\mathbb{X}^+, [R_{\Box}], \langle R_{\Diamond}\rangle)$ is a complete  normal lattice expansion such that $[R_\Box]$ is completely meet-preserving and $\langle R_\Diamond\rangle$ is completely join-preserving.
\end{lemma}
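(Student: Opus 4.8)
The plan is to establish three things: that $\mathbb{F}^+$ is a complete lattice, that $[R_\Box]$ and $\langle R_\Diamond\rangle$ are well-defined endomaps on $\mathbb{P}^+$, and that these operations preserve arbitrary meets (resp.\ joins). The completeness of $\mathbb{P}^+$ is already recorded in the preliminaries, so the first point is free. For well-definedness, I would unpack the $I$-compatibility hypothesis: given $c = (\val{c}, \descr{c})\in \mathbb{P}^+$, I need to check that $R_\Box^{(0)}[\descr{c}]$ is a stable $\mathbf{A}$-subset of $A$, so that the pair $(R_\Box^{(0)}[\descr{c}], (R_\Box^{(0)}[\descr{c}])^\uparrow)$ is genuinely a formal $\mathbf{A}$-concept; symmetrically for $\langle R_\Diamond\rangle$. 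The key observation is that $\descr{c}$ can be written via \eqref{eq:MV:join:generators} as a join $\bigvee_{x\in X}\{\descr{c}(x)/x\}$ of singleton-type $\mathbf{A}$-subsets, and $R_\Box^{(0)}[-]$ turns joins in its argument into meets (being a $\bigwedge$-of-implications construction, it is antitone and sends joins to meets, component by component in $A$); since each $R_\Box^{(0)}[\{\descr{c}(x)/x\}]$ is stable by $I$-compatibility and stable $\mathbf{A}$-subsets are closed under arbitrary meets (the closure operator $(\cdot)^{\uparrow\downarrow}$ being a meet of Galois-closed sets, or directly: Galois-stable sets form a closure system), the meet $R_\Box^{(0)}[\descr{c}]$ is stable. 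The same argument handles $R_\Diamond^{(0)}[\val{c}]$.

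Next, for the preservation properties, I would compute directly with the order on $\mathbb{P}^+$, recalling that $(f,u)\le(g,v)$ iff $f\subseteq g$ iff $v\subseteq u$, and that the meet of a family $\{c_i\}$ of concepts has value-component $\bigwedge_i \val{c_i}$ while the join has description-component $\bigwedge_i \descr{c_i}$. So to show $[R_\Box](\bigwedge_i c_i) = \bigwedge_i [R_\Box]c_i$, it suffices, by the definition of $[R_\Box]$ and the characterization of concepts by their value-component, to show
\[
R_\Box^{(0)}\Big[\big(\bigwedge_i c_i\big)\text{'s description}\Big] \;=\; \bigwedge_i R_\Box^{(0)}[\descr{c_i}].
\]
Now the description-component of $\bigwedge_i c_i$ is $\big(\bigvee_i \descr{c_i}\big)^{\downarrow\uparrow}$ (the closure of the join of the descriptions), but $R_\Box^{(0)}[-]$, being a composition with the Galois map $(\cdot)^\uparrow$-style behaviour, factors through this closure — more precisely, $R_\Box^{(0)}[u] = R_\Box^{(0)}[u^{\downarrow\uparrow}]$ whenever $R_\Box^{(0)}$-images are stable, which follows from $I$-compatibility together with the adjunction $S_A(f,u^\downarrow) = S_X(u,f^\uparrow)$. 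Hence $R_\Box^{(0)}[\big(\bigvee_i \descr{c_i}\big)^{\downarrow\uparrow}] = R_\Box^{(0)}[\bigvee_i \descr{c_i}] = \bigwedge_i R_\Box^{(0)}[\descr{c_i}]$, the last step again because $R_\Box^{(0)}[-]$ sends joins in the argument to meets. The dual computation, using that $R_\Diamond^{(0)}[-]$ sends joins to meets and that the value-component of $\bigvee_i c_i$ is the closure of $\bigvee_i \val{c_i}$, gives complete join-preservation of $\langle R_\Diamond\rangle$.

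The main obstacle I anticipate is the bookkeeping around \emph{closure-invariance}: showing that $R_\Box^{(0)}[-]$ and $R_\Diamond^{(0)}[-]$ do not see the difference between an $\mathbf{A}$-subset and its Galois closure, which is exactly where $I$-compatibility is used in an essential (non-cosmetic) way. The clean route is: (i) $R_\Box^{(0)}[-]$ is antitone and sends arbitrary joins of $\mathbf{A}$-subsets to meets; (ii) for a singleton $\{\alpha/x\}$, stability of $R_\Box^{(0)}[\{\alpha/x\}]$ is hypothesized; (iii) therefore $R_\Box^{(0)}[u]$ is stable for all $u$ (as a meet of stables, invoking \eqref{eq:MV:join:generators}); (iv) finally, stability of the output plus the $\mathbf{A}$-Galois adjunction forces $R_\Box^{(0)}[u] = R_\Box^{(0)}[u^{\downarrow\uparrow}]$, since $u \subseteq u^{\downarrow\uparrow}$ gives one inequality by antitonicity and the reverse follows because $R_\Box^{(0)}[u]^\uparrow \subseteq$ (something) collapses under the closure. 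Once this lemma-shaped fact is isolated, the rest is a short formal manipulation in the complete lattice $\mathbb{P}^+$, and the normality clauses (preservation of the top for $[R_\Box]$, of the bottom for $\langle R_\Diamond\rangle$) come out as the empty-meet and empty-join instances of the preservation statements. I would also remark that completeness of $\mathbb{P}^+$ as a lattice expansion (as opposed to just a lattice) is immediate once the operations are shown to be well-defined and order-preserving in the required direction.
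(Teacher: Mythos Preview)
The paper does not supply its own proof of this lemma; it simply records the statement with a reference to \cite[Lemma 15]{roughconcepts}. Your outline is the standard argument for results of this shape and is essentially correct.

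One point deserves sharpening. In step (iv) of your closure-invariance argument you write that ``stability of the output plus the $\mathbf{A}$-Galois adjunction'' forces $R_\Box^{(0)}[u] = R_\Box^{(0)}[u^{\downarrow\uparrow}]$, but stability of $R_\Box^{(0)}[u]$ alone does not obviously yield the non-trivial direction $R_\Box^{(0)}[u] \subseteq R_\Box^{(0)}[u^{\downarrow\uparrow}]$. What is actually needed is the stability of the \emph{dual} images $R_\Box^{(1)}[f]$, which follows from the other half of the $I$-compatibility hypothesis (stability of $R_\Box^{(1)}[\{\alpha/a\}]$) by the same join-decomposition argument via \eqref{eq:MV:join:generators} that you already used for $R_\Box^{(0)}$. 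The clean route is then: $R_\Box^{(0)}$ and $R_\Box^{(1)}$ themselves form an $\mathbf{A}$-Galois connection (the same residuation computation as for $I$ shows $S_A(f, R_\Box^{(0)}[u]) = S_X(u, R_\Box^{(1)}[f])$), so $u \subseteq R_\Box^{(1)}[R_\Box^{(0)}[u]]$; the right-hand side is $I$-stable as a meet of stable sets $R_\Box^{(1)}[\{\alpha/a\}]$, hence $u^{\downarrow\uparrow} \subseteq R_\Box^{(1)}[R_\Box^{(0)}[u]]$, and one more application of the $R_\Box$-adjunction gives $R_\Box^{(0)}[u] \subseteq R_\Box^{(0)}[u^{\downarrow\uparrow}]$. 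With this clarification your argument is complete.
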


\subsection{Many-valued polarity-based models}
Let $\mathcal{L}$ be the language of Section \ref{sec:logics}. 
\begin{definition}
	A {\em conceptual}  $\mathbf{A}$-{\em model} over a  set $\mathsf{AtProp}$ of atomic propositions is a tuple $\mathbb{M} = (\mathbb{F}, V)$ such that $\mathbb{F} = (A, X, I, R_\Box, R_\Diamond)$ is an enriched formal $\mathbf{A}$-context and $V: \mathsf{AtProp}\to \mathbb{F}^+$. For every $p\in \mathsf{AtProp}$, let $V(p): = (\val{p}, \descr{p})$, where $\val{p}: A\to \mathbf{A}$ and $\descr{p}: X\to\mathbf{A}$, and $\val{p}^\uparrow = \descr{p}$ and $\descr{p}^\downarrow = \val{p}$.
	Letting $\mathcal{L}$ denote the $\{\Box, \Diamond\}$ modal language  over $\mathsf{AtProp}$, 
	every $V$ as above has a unique homomorphic extension, also denoted $V: \mathcal{L} \to \mathbb{F}^+$, defined as follows:
	\begin{center}
		\begin{tabular}{r c l}
			$V(p)$ & = & $(\val{p}, \descr{p})$\\
			$V(\top)$ & = & $(\top^{\mathbf{A}^A}, (\top^{\mathbf{A}^A})^\uparrow)$\\
			$V(\bot)$ & = & $((\top^{\mathbf{A}^X})^\downarrow, \top^{\mathbf{A}^X})$\\
			$V(\phi\wedge \psi)$ & = & $(\val{\phi}\wedge\val{\psi}, (\val{\phi}\wedge\val{\psi})^\uparrow)$\\
			$V(\phi\vee \psi)$ & = & $((\descr{\phi}\wedge\descr{\psi})^\downarrow, \descr{\phi}\wedge\descr{\psi})$\\
			$V(\Box\phi)$ & = & $(R^{(0)}_\Box[\descr{\phi}], (R^{(0)}_\Box[\descr{\phi}])^\uparrow)$\\
			$V(\Diamond\phi)$ & = & $((R^{(0)}_\Diamond[\val{\phi}])^\downarrow, R^{(0)}_\Diamond[\val{\phi}])$\\
		\end{tabular}
	\end{center}
	which in its turn induces  $\alpha$-{\em membership relations} for each $\alpha\in \mathbf{A}$ (in symbols: $\mathbb{M}, a\Vdash^\alpha \phi$), and $\alpha$-{\em description relations} for each $\alpha\in \mathbf{A}$ (in symbols: $\mathbb{M}, x\succ^\alpha \phi$)---cf.~discussion in Section \ref{sec:logics}---such that for every $\phi\in \mathcal{L}$,
	\[\mathbb{M}, a\Vdash^\alpha \phi \quad \mbox{ iff }\quad \alpha\leq \val{\phi}(a),\]
	\[\mathbb{M}, x\succ^\alpha \phi \quad \mbox{ iff }\quad \alpha\leq \descr{\phi}(x).\]
	This can be equivalently expressed by means of the following  recursive definition:
	\begin{center}
		\begin{tabular}{r c l}
			$\mathbb{M}, a\Vdash^\alpha p$ & iff & $\alpha\leq \val{\phi}(a)$;\\
			$\mathbb{M}, a\Vdash^\alpha \top$ & iff & $\alpha\leq (\top^{\mathbf{A}^A})(a)$ i.e.~always;\\
			$\mathbb{M}, a\Vdash^\alpha \bot$ & iff & $\alpha\leq (\top^{\mathbf{A}^X})^\downarrow (a) = \bigwedge_{x\in X}(\top^{\mathbf{A}^X}(x)\to I(a, x)) =  \bigwedge_{x\in X} I(a, x)$;\\
			$\mathbb{M}, a\Vdash^\alpha \phi\wedge \psi$ & iff & $\mathbb{M}, a\Vdash^\alpha \phi$ and $\mathbb{M}, a\Vdash^\alpha \psi$;\\
			$\mathbb{M}, a\Vdash^\alpha \phi\vee \psi$ & iff & $\alpha\leq (\descr{\phi}\wedge\descr{\psi})^\downarrow(a) = \bigwedge_{x\in X}(\descr{\phi}(x)\wedge\descr{\psi}(x)\to I(a, x))$;\\
			$\mathbb{M}, a\Vdash^\alpha \Box \phi$ & iff & $\alpha\leq (R^{(0)}_\Box[\descr{\phi}])(a) = \bigwedge_{x\in X}(\descr{\phi}(x)\to R_\Box(a, x))$;\\
			$\mathbb{M}, a\Vdash^\alpha \Diamond \phi$ & iff & $\alpha\leq ((R^{(0)}_\Diamond[\val{\phi}])^\downarrow)(a) = \bigwedge_{x\in X}((R^{(0)}_\Diamond[\val{\phi}])(x)\to I(a, x))$\\
		\end{tabular}
	\end{center}
	
	\begin{center}
		\begin{tabular}{r c l}
			$\mathbb{M}, x\succ^\alpha p$ & iff & $\alpha\leq \descr{\phi}(x)$;\\
			$\mathbb{M}, x\succ^\alpha \bot$ & iff & $\alpha\leq (\top^{\mathbf{A}^X})(x)$ i.e.~always;\\
			$\mathbb{M}, x\succ^\alpha \top$ & iff & $\alpha\leq (\top^{\mathbf{A}^A})^\uparrow (x) = \bigwedge_{a\in A}(\top^{\mathbf{A}^A}(a)\to I(a, x)) =  \bigwedge_{a\in A} I(a, x)$;\\
			$\mathbb{M}, x\succ^\alpha \phi\vee \psi$ & iff & $\mathbb{M}, x\succ^\alpha \phi$ and $\mathbb{M}, x\succ^\alpha \psi$;\\
			$\mathbb{M}, x\succ^\alpha \phi\wedge \psi$ & iff & $\alpha\leq (\val{\phi}\wedge\val{\psi})^\uparrow(x) = \bigwedge_{a\in A}(\val{\phi}(a)\wedge\val{\psi}(a)\to I(a, x))$;\\
			$\mathbb{M}, x\succ^\alpha \Diamond \phi$ & iff & $\alpha\leq (R^{(0)}_\Diamond[\val{\phi}])(x) = \bigwedge_{a\in A}(\val{\phi}(a)\to R_\Diamond(x, a))$;\\
			$\mathbb{M}, x\succ^\alpha \Box \phi$ & iff & $\alpha\leq ((R^{(0)}_\Box[\descr{\phi}])^\uparrow)(x) = \bigwedge_{a\in A}((R^{(0)}_\Box[\descr{\phi}])(a)\to I(a, x))$.\\
		\end{tabular}
	\end{center}
\end{definition}

\begin{definition}\label{def:Sequent:True:In:Model} 
	A sequent $\phi \vdash \psi$ is \emph{true in model} $\mathbb{M} = (\mathbb{F}, V)$, notation  $\mathbb{M} \models \phi \vdash \psi$, if $\val{\phi} \subseteq \val{\psi}$, or equivalently, $\descr{\psi} \subseteq \descr{\phi}$.  
	A sequent $\phi \vdash \psi$ is \emph{valid} in an enriched formal $\mathbf{A}$-context $\mathbb{F}$, notation  $\mathbb{F} \models \phi \vdash \psi$, if   $\phi \vdash \psi$ is true in every model $\mathbb{M}$ based on $\mathbb{F}$.
%
\end{definition}

\section{Discussion: multi-market competition}
\label{sec:case study}
The literature on industrial behaviour has identified {\em multi-market contact} between firms as one of the possible reasons for `the edge of competition' becoming `blunt' in a focal market \cite{edwards1955conglomerate}. When two firms or more are active in many different product markets, they will not only compete in any given market with the other companies in that market, but will have a  specific competitive relation among each other which spans across the different markets in which they are active.  
In this situation, anticompetitive outcomes can ensue in the form of {\em mutual forebearance} (that is, the attitude by which `if you do not attack me in market $x$, I will not attack you in market $y$'), which has been studied game-theoretically in a two-firm/two-market environment \cite{bernheim1990multimarket}. 
However,  to understand the extent and nature of competition between real-life firms such as Unilever, l'Oreal, Nestl\'e and Yamaha, which are active in hundreds of product markets,\footnote{Unilever's  products include food and beverages (about 40 per cent of its revenue), cleaning agents, beauty products, and personal care products. l'Oreal focuses on  hair colour, skin care, sun protection, make-up, perfume, and hair care. Yamaha's products  include musical instruments (pianos, "silent" pianos, drums, guitars, brass instruments, woodwinds, violins, violas, cellos, and vibraphones), as well as  semiconductors, audio/visual, computer related products, sporting goods, home appliances, specialty metals and industrial robots.
Yamaha made the first commercially successful digital synthesizer.} and with very diverse, graded,  and mostly non symmetric competitive relationships, an even broader and more encompassing theoretical perspective is needed, and  the state of the art has not changed much since  Bernheim and Whinston's observation \cite{bernheim1990multimarket} that ``the existing literature contains virtually no formal theoretical analyses.''

Besides multimarket contact, {\em strategic similarity} is another concept that has been proposed to describe and analyse the relations between firms spanning across multiple product markets, and a factor which tends to sharpen, rather than blunt, the edge of competition \cite{gimeno1996hypercompetition}. 

In the present section, we discuss how  many-valued polarity-based models can serve as a formal environment to analyse these and other aspects of multi-market competition which we argue to be both salient and not yet investigated in the literature.

Indeed, the arena of multi-market competition can be represented as an $\mathbf{A}$-polarity $\mathbb{P} = (A, X, I)$ such that $A$ is the given set of firms under consideration, $X$ is the relevant set of product markets, and $I(a, x)\in \mathbf{A}$ encodes the extent to which firm $a$ is active in product-market $x$ for every $a\in A$ and $x\in X$. Being {\em active} in a market can be interpreted in many different ways, including the following:
\begin{enumerate}[noitemsep,topsep=0pt,parsep=0pt,partopsep=0pt]
\item  which percentage of the value of sales in the market $x$ is generated by $a$;
\item which percentage of sales revenues of $a$ is generated in the market $x$;
\item which percentage of new product introductions in market $x$ over the past year are produced by $a$.
\end{enumerate}
The list can of course go on, but  each of these possible interpretations gives rise to a collection of  (fuzzy) categories of firms and product markets (sometimes referred to below as firm/market categories),  ordered in the concept lattice hierarchy $\mathbb{P}^+$ arising from $\mathbb{P}$. This representation provides the first layer of structure for representing and analysing the arena of multi-market competition. For instance, one can gauge information on a focal firm $a$ in the context of the arena of multi-market competition in terms of the position of the category generated by $a$ within the concept lattice $\mathbb{P}^+$. That is, representing $a\in A$ as the characteristic function\footnote{That is, for any $b\in A$, $f_a (b) = 1$ if $b = a$ and $f_a (b) = 0$ otherwise.} $f_a: A\to \mathbf{A}$, the category generated by $a$ can be represented as the formal $\mathbf{A}$-concept $c_a: = (f_a^{\uparrow\downarrow}, f_a^{\uparrow})$, where $f_a^{\uparrow}: X\to \mathbf{A}$ is defined by the assignment $x\mapsto \bigwedge_{b\in A} f_a(b)\to I(b, x) = 1\to I(a, x) =I(a, x)$, that is, for every product market $x$, the value of $f_a^{\uparrow}(x)$ represents the extent to which $a$ is active in $x$,  and $f_a^{\uparrow\downarrow}: A\to \mathbf{A}$ is defined by the assignment $b\mapsto \bigwedge_{x\in X} f_a^{\uparrow}(x)\to I(b, x) = \bigwedge_{x\in X} I(a, x)\to I(b, x)$. Hence, one way for a firm $b$ to have the highest degree of membership in the category generated by the focal firm $a$ (that is, $f_a^{\uparrow\downarrow} (b) = 1$) is to be at least as active as $a$ in each product market $x$. In the case in which $f_a^{\uparrow\downarrow} (b) < 1$, the value of $f_a^{\uparrow\downarrow} (b)$ represents the greatest extent to which $b$ is less active than $a$ in any given market.   

Likewise, representing $x\in X$ as the characteristic function $u_x: X\to \mathbf{A}$, the category generated by $x$ can be represented as the formal $\mathbf{A}$-concept $c_x: = (u_x^{\downarrow}, u_x^{\downarrow\uparrow})$, where $u_x^{\downarrow}: A\to \mathbf{A}$ is defined by the assignment $a\mapsto \bigwedge_{y\in X} u_x(y)\to I(a, y) = 1\to I(a, x) =I(a, x)$, that is, for every firm $a$, the value of $u_x^{\downarrow}(a)$ represents the extent to which $a$ is active in $x$,  and $u_x^{\downarrow\uparrow}: X\to \mathbf{A}$ is defined by the assignment $y\mapsto \bigwedge_{a\in A} u_x^{\downarrow}(a)\to I(a, y) = \bigwedge_{a\in A} I(a, x)\to I(a, y)$. 
Hence, one way for a market $y$ to have the highest degree of membership in the category generated by the focal market $x$ (that is, $u_x^{\downarrow\uparrow} (y) = 1$) is if any firm $a$ is at least as active  $y$ as $a$ is in $x$. 

A third natural class of firm/market categories  arises in connection with ``baskets of products'' typically consumed by e.g.~certain demographic groups. For instance,  university students in the UK spend 2\% on toothpaste, 4\% on bread, 3\% on ice cream. 
Each such basket corresponds to a fuzzy subset $Y\subseteq X$, which can be represented again as a characteristic function $u_Y: X\to \mathbf{A}$ which gives rise, analogously to what has been discussed above, to the category $c_Y: = (u_Y^{\downarrow}, u_Y^{\downarrow\uparrow})$. Here $u_Y^{\downarrow}$ is the fuzzy category of producers catering to UK students, with a higher degree of membership indicating a greater alignment of a producer's market offerings to the spending habits of students.


These considerations bring us naturally to another relevant aspect of the analysis of multi-market competition, namely the {\em consumer groups} targeted by firms. This information can be 
encoded in  $\mathbf{A}$-relations $R_{\Box}: A\times X\to \mathbf{A}$ or to $R_{\Diamond}: X\times A\to \mathbf{A}$, each of which provides the interpretation of a $\Box$-type or of a $\Diamond$-type modal operator on the lattice $\mathbb{P}^+$, as appropriate. Possible interpretations for $R_{\Box}$- and $R_{\Diamond}$-type relations are refinements of items 1 to 3, above, with respect to given target groups of consumers. Thereby, for example, $R_{\Box}(a,x)$ could represent:
\begin{itemize}[noitemsep,topsep=0pt,parsep=0pt,partopsep=0pt]
	\item[$1'$.] the percentage of sales (by value) to a given target group $p$ in the market $x$, generated by $a$; or
	\item[$2'$.] the percentage of $a$'s revenue generated by sales to a given target group $p$, generated in market $x$;
\end{itemize}
while $R_{\Diamond}(a,x)$ could be made to encode 
\begin{itemize}[noitemsep,topsep=0pt,parsep=0pt,partopsep=0pt]
	\item[$3'$.] the percentage of new product introductions in market $x$ targeted at $p$ over the past year that are produced by $a$.	
\end{itemize}	
Hence, under the latter interpretation,  letting $c_a$ denote the category generated by firm $a$, the degree of membership of any firm $b$ in   $\Box_p c_a$ is computed as
$$\val{\Box_p c_a}(b) = \bigwedge_{x\in X} f_a^{\uparrow}(x)\to R_{\Box}(b, x) = \bigwedge_{x\in X} I(a, x)\to R_{\Box}(b, x).$$
Therefore, one way for a firm $b$ to have the highest degree of membership in $\Box_p c_a$  (that is, $\val{\Box_p c_a}(b) = 1$) is for $b$ to be, in each product market $x$,  at least as active relative to the target group $p$  as $a$ is  in an unrelativized way.

The extent to which firms are {\em strategically similar} can be encoded in $\mathbf{A}$-relations of type $R_{\rhd}: A\times A\to \mathbf{A}$, and again, the notion of strategic similarity, formalized as $R_{\rhd}(a, b)$, can be interpreted in different ways, which includes, but is certainly not limited to:
\begin{itemize}[noitemsep,topsep=0pt,parsep=0pt,partopsep=0pt]
\item[4.] the extent to which $a$ and $b$ are active in each market;
\item[5.] the extent to which the target consumer group in which the firm $a$ generates the largest proportion of its sales revenues for firm $a$ overlaps with that of firm $b$.
\end{itemize}
Each of these interpretations provides a meaningful and precise expression of the extent to which firms are in competition.
As also observed in other contexts (cf.~\cite{graph-based-wollic, eusflat, socio-political}), these similarity relations do not need to be symmetric or transitive, in general. 
Under each interpretation, each $\mathbf{A}$-relation $R_{{\rhd}}$ can then be used to interpret a unary modal operator ${\rhd}$ on formal concepts. For instance,
if $\phi$ is a category of firms, then the degree of membership of any firm $b$ in   ${\rhd} \phi$ is computed as
$$\val{{\rhd} \phi}(b) = \bigwedge_{b'\in A} \val{\phi}(b')\to R_{\rhd}(b', b).$$ 
Thus, for example, if $\phi$ represented processed food producers, and $b$ represented Unilever, then $\val{{\rhd} \phi}(b)$ would be the minimum strategic similarity to Unilever among processed food producers.   
%
Alternatively, if we were to start from a category generated by a single product market, say category $c_x$ generated by market $x$, then the degree of membership of any firm $b$ in   ${\rhd} c_x$ is computed as
$$\val{{\rhd} c_x}(b) = \bigwedge_{b'\in A} u_x^{\downarrow}(b')\to R_{\rhd}(b', b) =  \bigwedge_{b'\in A}  I(b', x)\to R_{\rhd}(b', b).$$
Thus $b$ would have a high degree of membership to $\val{{\rhd} c_x}$ if every firm that is highly active in market $x$ had a high degree of similarity to $b$, in other words, if $b$ were a \emph{strategically typical} producer of products in $x$. 


One aspect which---to our knowledge---has not yet been investigated concerns the various ways in which product markets are {\em connected} --- for instance,  in the sense that the same target group of consumers (e.g.~teenagers) buys different products in different markets (e.g.~soft drinks and music products and instruments). The extent to which markets are connected (in respect to given groups of consumers) has important  consequences on the competition playing out across them. This notion can be encoded in $\mathbf{A}$-relations of type $R_{\lhd}: X\times X\to \mathbf{A}$. Possible interpretations of the notion of market connection, represented as $R_{\lhd}(x, y)$, are:
\begin{itemize}[noitemsep,topsep=0pt,parsep=0pt,partopsep=0pt]
\item[6.] how many firms  which are active in $x$ are also active in  $y$;
\item[7.] the extent to which the most important consumer group for market $x$ overlaps with the most important consumer group for market $y$.
\end{itemize}
Under each interpretation, each $\mathbf{A}$-relation $R_{{\lhd}}$ can then be used to interpret a unary modal operator ${\lhd}$ on formal concepts. For instance,
if $c_x$ is the category generated by focal market $x$, then the degree of membership of any market $y$ in   ${\lhd} c_x$ is computed as
$$\descr{{\lhd} c_x}(y) = \bigwedge_{z\in X} u_x^{\downarrow\uparrow}(z)\to R_{\lhd}(z, y), $$ 
and if $c_a$ is the category generated by firm $a$, then the degree of membership of any market $y$ in   ${\lhd} c_a$ is computed as
$$\descr{{\lhd} c_a}(y) = \bigwedge_{z\in X} f_a^{\uparrow}(z)\to R_{\lhd}(z, y) =  \bigwedge_{z\in X}  I(a, z)\to R_{\lhd}(z, y),$$
thus assigning a high degree of membership to a product market $y$ if all product markets in which firm $a$ is highly active are highly similar to $y$, i.e., if $y$ is a typical product market for $a$ to be active in. 

The discussion so far concretely illustrates how the present framework can serve as a formal environment where a theory of multi-market competition can be systematically developed, so that  questions can be formulated explicitly enough to support further empirical investigation. One such basic question is: what does it mean for a focal firm $a$ to have a {\em main competitor}? This question is by no means trivial for multi-product firms. For instance, although traditionally the Coca-Cola company has been Pepsico's main competitor, nowadays, because of Pepsico's presence in the food sector, companies such as Kraft, Heinz, Unilever or Nestl\'e might in fact score higher. 

%
%
%
%
%
%
Another basic question us about what it means for a
firm $a$ to have  a {\em dominant position}\footnote{A dominant position is `a position of economic strength enjoyed by an undertaking which enables it to prevent effective competition being maintained in the relevant market by giving it the power to behave to an appreciable extent independently of its competitors, customers and ultimately of consumers’ (Case 27/76, United Brands).} in a given product market $x$. For instance, if the focal firm $a$ has only  30\% market share in market $x$, but all its multi-market competitors in $x$, as far as they are involved in other markets, serve for 80\% consumers from a given consumer group $p$,  and $a$ has  65\% share of that particular consumer group over all markets, then this will rather strengthen $a$'s competitive position vis-a-vis its competitors, because $a$ can use its strong position in $p$ to threaten the competitive positions of its competitors in other product markets, even those where $a$ is not yet active, because $a$ can easily target the relevant consumers from $p$, given how dominant $a$ is in $p$. 

So, as a hypothetical example, if Unilever has a serious market share in the market $x$ of ice-creams, but all its  competitors  in $x$, in as far as they are active in other product  markets, are largely targeting the teenagers consumer group $p$, and  Unilever has  50\% share of over all markets in group $p$, this makes Unilever's position in $x$ more dominant than it would prima facie look.

Similar considerations apply when evaluating not only the  (dominant) position of a firm {\em statically}, i.e.~in the present moment, but also {\em dynamically}, i.e.~how it might evolve in the future: for instance, 
%
compared to e.g.~Unilever, Pepsico is much stronger in the children/teenagers consumer group; so if Pepsico were planning a major acquisition in a market such as potato chips,  which is also heavily for teenagers/children,
(e.g.~Pepsico tries to buy Kettle Chips from present owner Campbell Soup Company of Andy Warhol fame),  
this acquisition would be more dangerous than the market share in potato chips would convey because of Pepsico's overall strong position in children food products.

\section{Conclusions}

In this paper, we have introduced a complete many-valued semantic environment for (multi) modal languages based on the logic of general  (i.e.~not necessarily distributive) lattices, and  have illustrated its potential as a tool for the formal analysis of multi-market competition. As this is only a preliminary exploration, many questions arise, both technical and conceptual, of which here we list a few.

\paragraph{Empirical investigation. } 

We proposed new ways to understand strategic similarity which can be quantified and provide precise measures of the extent to which firms are in competition with each other. This offers a new foundation for empirical studies into the antecedents and especially the consequences of two or more firms being in competition, exploring, for instance, the effect of the competitive intensity between the firms on the likelihood of them investing in R\&D, or introducing radical innovations. 

\paragraph{Different classifications. } 

The possible contributions above build on particular classification systems of product markets and consumer groups, as is usual in most studies in industrial economics or strategic management. However, our approach lends itself well to exploring the consequences of the possibility to categorize both markets and consumer groups in many different ways. Categorizing markets on the basis of feature-based categories, such as fruit or bicycles, creates a different view of competitive processes than using goal-based categories, such as products for leisure or status-symbol products. There might well be situations in which more than one classification system is applicable - in the sense of having influence on evaluatory decisions of consumers - at the same time. Our approach again allows to quantify who competes with whom and to which extent in all relevant classification systems, and propose composite measures of strategic similarity given these relevant classification systems \cite{wijnberg2011classification}.  

\paragraph{Dynamics. }

Our approach also offers a new foundation for new theoretical and empirical exploration of dynamics. The competitive dynamics in the sense of changing degrees to which firms are strategically similar and compete with each other in the relevant product markets and consumer groups,  the classificatory dynamics of the categories of product markets and consumer groups themselves, and the interrelations between both types of dynamics, which is at the heart of a true understanding of innovatory dynamics in modern market competition (see also \cite{wijnberg2011classification}).

Wijnberg, Nachoem M. "Classification systems and selection systems: The risks of radical innovation and category spanning." Scandinavian Journal of Management 27, no. 3 (2011): 297-306.

\paragraph{More expressive languages.}  We conjecture that the proof of completeness of the logic of  Section \ref{sec:logics} given in Appendix \ref{sec:completeness} can be extended modularly to more expressive languages  that display essentially ``many valued" features in analogy with those considered in \cite{bou2011minimum}. This is current work in progress.

\paragraph{Sahlqvist theory for many-valued non-distributive logics.} A natural direction of research is to develop the generalized Sahlqvist theory for the logics of graph-based $\mathbf{A}$-frames, by extending the results of  \cite{CMR} on Sahlqvist theory for  many-valued logics on a distributive base.

\bibliographystyle{plain}
\bibliography{BIBeusflat2019-2}

\appendix

\section{Completeness}
\label{sec:completeness}
\newcommand{\PP}{\mathbb{Fm}}
\newcommand{\SD}{\mathbb{Fm}}
\newcommand{\SI}{\mathbb{Fm}}

This section is an adaptation of  the completeness result of \cite[Appendix A]{eusflat} and \cite[Appendix B]{socio-political}. 


For any lattice $\mathbb{L}$, an $\mathbf{A}$-{\em filter} is an $\mathbf{A}$-subset of $\mathbb{L}$, i.e.~a map $f:\mathbb{L}\to \mathbf{A}$, which is both $\wedge$- and $\top$-preserving, i.e.~$f(\top) = 1$ and $f(a\wedge b) = f(a)\wedge f(b)$ for any $a, b\in\mathbb{L}$. Intuitively, the $\wedge$-preservation encodes a many-valued version of closure under $\wedge$ of filters. An $\mathbf{A}$-filter is {\em proper} if it is also $\bot$-preserving, i.e.~$f(\bot)= 0$.   Dually, an $\mathbf{A}$-{\em ideal} is a map $i:\mathbb{L}\to \mathbf{A}$ which is both $\vee$- and $\bot$-reversing, i.e.~$i(\bot) = \top$ and $i(a\vee b) = i(a)\wedge i(b)$ for any $a, b\in\mathbb{L}$, and is {\em proper} if in addition $i(\top) = 0$. 
We let $\mathsf{F}_{\mathbf{A}}(\mathbb{L})$ and $\mathsf{I}_{\mathbf{A}}(\mathbb{L})$
 respectively denote the set of  proper $\mathbf{A}$-filters and  proper $\mathbf{A}$-ideals
of $\mathbb{L}$. 
For any $\mathcal{L}$-algebra $(\mathbb{L}, \Box, \Diamond)$, and any $\mathbf{A}$-subset $k:\mathbb{L}\to \mathbf{A}$, let $k^{-\Diamond}: \mathbb{L}\to \mathbf{A}$  be defined as $k^{-\Diamond}(a)=\bigvee\{k(b)\mid \Diamond b\leq a\}$
and let $k^{-\Box}: \mathbb{L}\to \mathbf{A}$  be defined as $k^{-\Box}(a)=\bigvee \{k(b)\mid a \leq \Box b \}$. 
By definition one can see that $k(a)\leq k^{-\Diamond}(\Diamond a)$ and 
$k(a)\leq k^{-\Box}(\Box a)$ for every $a\in \mathbb{L}$.
Let $\mathbf{Fm}$ be the Lindenbaum-Tarski algebra 
of $\mathcal{L}$-formulas. 
\begin{lemma}\label{lemma:f minus diam is filter}
\begin{enumerate}
\item If $f:\mathbb{L}\to \mathbf{A}$ is  an $\mathbf{A}$-filter, then so is $f^{-\Diamond}$.  
\item If $f:\mathbf{Fm}\to \mathbf{A}$ is  a proper $\mathbf{A}$-filter, then so is $f^{-\Diamond}$.
\item If $i:\mathbb{L}\to \mathbf{A}$ is an $\mathbf{A}$-ideal, then so is $i^{-\Box}$. 
\item If $i:\mathbf{Fm}\to \mathbf{A}$ is a proper $\mathbf{A}$-ideal, then so is $i^{-\Box}$.
\item If $\phi, \psi\in\mathcal{L}$, then $\top\vdash \phi\vee \psi$ implies that $\top\vdash\phi$ or $\top\vdash\psi$.
\item If $\phi, \psi\in\mathcal{L}$, then $\phi\not\vdash \bot$ and $\psi\not\vdash \bot$ implies that $\phi \wedge \psi\not\vdash \bot$.
\end{enumerate}
\end{lemma}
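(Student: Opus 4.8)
The six items form three order-dual pairs: (1)/(3) are pointwise computations, (2)/(4) refine them via a reflection property of the modalities on $\mathbf{Fm}$, and (5)/(6) are proof-theoretic facts about $\mathbf{L}$. For (1), I would first note that $a\leq a'$ in $\mathbb{L}$ gives $\{b\mid\Diamond b\leq a\}\subseteq\{b\mid\Diamond b\leq a'\}$, so $f^{-\Diamond}$ is monotone, hence $f^{-\Diamond}(a\wedge c)\leq f^{-\Diamond}(a)\wedge f^{-\Diamond}(c)$; and that $\Diamond b\leq\top$ for all $b$, so $f^{-\Diamond}(\top)\geq f(\top)=1$, which gives $\top$-preservation. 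For the reverse inequality $f^{-\Diamond}(a)\wedge f^{-\Diamond}(c)\leq f^{-\Diamond}(a\wedge c)$ the hypotheses on $\mathbf{A}$ and on $\Diamond$ enter: by frame-distributivity of $\mathbf{A}$ the left side equals $\bigvee\{f(b)\wedge f(b')\mid\Diamond b\leq a,\ \Diamond b'\leq c\}$; since $f$ is an $\mathbf{A}$-filter, $f(b)\wedge f(b')=f(b\wedge b')$; and since $\Diamond$ is monotone, $\Diamond(b\wedge b')\leq\Diamond b\wedge\Diamond b'\leq a\wedge c$, so each summand is $\leq f^{-\Diamond}(a\wedge c)$. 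Item (3) is the order-dual: reverse the order on $\mathbb{L}$, swap $\wedge\leftrightarrow\vee$ and $\Diamond\leftrightarrow\Box$, and use that an $\mathbf{A}$-ideal turns $\vee$ into $\wedge$.

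For (2) it suffices to show $f^{-\Diamond}(\bot)=0$. Since $f^{-\Diamond}(\bot)=\bigvee\{f(b)\mid\Diamond b\leq_{\mathbf{Fm}}\bot\}$ and $f$ is monotone with $f(\bot)=0$, it is enough to prove that the only $b$ with $\Diamond b\leq_{\mathbf{Fm}}\bot$ is $b=\bot$, i.e.\ that $\Diamond b\vdash_{\mathbf{L}}\bot$ implies $b\vdash_{\mathbf{L}}\bot$. I would establish the contrapositive by operator surgery: equip the lattice reduct of $\mathbf{Fm}$ with the operators $\Diamond'$, sending $c\mapsto\top$ for $c\neq\bot$ and $\bot\mapsto\bot$, and $\Box'$, sending $c\mapsto\top$ for $c=\top$ and $c\mapsto\bot$ otherwise. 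These are the greatest normal $\Diamond$-type and the least normal $\Box$-type operators on any bounded lattice, so $(\mathbf{Fm},\Box',\Diamond')$ validates every axiom and rule of $\mathbf{L}$; hence by soundness $\Diamond b\vdash_{\mathbf{L}}\bot$ forces $\Diamond'[b]=\bot$, i.e.\ $[b]=\bot$. Item (4) is dual: $i^{-\Box}(\top)=\bigvee\{i(b)\mid\top\leq_{\mathbf{Fm}}\Box b\}$, and the same surgery yields $\top\vdash_{\mathbf{L}}\Box b\ \Rightarrow\ \top\vdash_{\mathbf{L}}b$, so this join collapses to $i(\top)=0$.

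Items (5) and (6) say that $\top$ is join-prime and $\bot$ meet-prime in $\mathbf{Fm}$. I would derive both from analyticity of $\mathbf{L}$: cut elimination for its proper display calculus (cf.~\cite{greco2018algebraic}); a direct induction on Hilbert-style derivations also works but is more laborious. In a cut-free derivation of $\top\vdash\phi\vee\psi$ the last logical rule must introduce the main connective of the non-atomic succedent, so it is $\vee$-right and its premise is $\top\vdash\phi$ or $\top\vdash\psi$; dually, a cut-free derivation of $\phi\wedge\psi\vdash\bot$ must end with $\wedge$-left, whose premise is $\phi\vdash\bot$ or $\psi\vdash\bot$, and contraposition yields (6).

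The step I expect to be the main obstacle is the reflection property behind (2) and (4) --- that in $\mathbf{Fm}$ the operator $\Diamond$ sends nothing but $\bot$ to $\bot$, and $\Box$ sends nothing but $\top$ to $\top$. This is neither a pointwise identity like (1)/(3) nor immediately visible in the shape of a cut-free proof like (5)/(6); the surgery argument above is the cleanest route I see, and the delicate point is checking that $\Box'$ and $\Diamond'$ really validate every axiom and rule of $\mathbf{L}$ --- in particular the normality axioms $\top\vdash\Box\top$, $\Box p\wedge\Box q\vdash\Box(p\wedge q)$, $\Diamond\bot\vdash\bot$, $\Diamond(p\vee q)\vdash\Diamond p\vee\Diamond q$ together with the monotonicity rules --- which holds exactly because $\Box'$ and $\Diamond'$ are the extremal normal operators.
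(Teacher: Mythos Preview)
Your treatment of items (1) and (3) is correct and coincides with the paper's argument (the paper spells out only (3), citing (1), (2), (5), (6) from \cite{eusflat}).

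The operator-surgery argument for (2) and (4), however, has a real gap. When you equip the lattice reduct of $\mathbf{Fm}$ with the extremal operators $\Box',\Diamond'$ and invoke soundness for $\Diamond b\vdash\bot$, what soundness yields is $\Diamond' v(b)=\bot$ for every valuation $v$ into the \emph{surgery} algebra --- but $v(b)$ is computed with $\Box',\Diamond'$, not with the original operators of $\mathbf{Fm}$, so in general $v(b)\neq[b]$. Concretely, for $b=\Box p$ and the valuation $v(p)=[p]$ one gets $v(b)=\Box'[p]=\bot$ (since $[p]\neq[\top]$), while $[\Box p]\neq[\bot]$ in $\mathbf{Fm}$. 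So the passage ``$\Diamond'[b]=\bot$, i.e.\ $[b]=\bot$'' conflates the surgery-algebra value with the $\mathbf{Fm}$-equivalence class, and from ``$v(b)=\bot$ in one particular algebra'' you cannot conclude $b\vdash_{\mathbf{L}}\bot$. The difficulty is precisely that replacing $\Box$ by the \emph{smallest} normal box can collapse values to $\bot$.

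The paper's route for (4) is different and avoids this: it argues the contrapositive via \emph{completeness}. Given $\top\not\vdash\phi$, take a normal lattice expansion $\mathbb{C}$ and valuation $v$ with $v(\phi)\neq 1$; then adjoin a fresh top $1'$ above $\mathbb{C}$ and set $\Box 1'=1'$. The point is that the original operations on $\mathbb{C}$ are left intact, so the value $v(\phi)$ is unchanged and stays $\leq 1<1'$, whence $v(\Box\phi)\leq 1<1'$ and $\top\not\vdash\Box\phi$. Adding a new element \emph{preserves} subformula values; replacing the modal operators does not. (Your surgery idea can be salvaged if, working from a countermodel $(\mathbb{C},v)$ with $v(b)\neq\bot$, you only \emph{enlarge} $\Diamond$ to the maximal normal diamond and leave $\Box$ alone: since every connective of $\mathcal{L}$ is monotone, the new value $v'(b)$ can only increase, so $v'(b)\neq\bot$ and then $\Diamond' v'(b)=\top\neq\bot$.) As a minor aside, your cut-elimination sketch for (5)/(6) would also need tightening: in a display calculus the last inference above $\top\vdash\phi\vee\psi$ may be a display or structural rule, or a left-introduction for $\top$, so the ``last logical rule is $\vee$-right'' claim requires an extra argument.
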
  
\begin{proof} 
We only prove items 3 and 4, as the other items were proven in \cite[Lemma A.1]{eusflat}. For 3, we first show that $i^{-\Box}$ is $\bot$-reversing:
\begin{center}
\begin{tabular}{cll}
   $i^{-\Box}(\bot)$ & = & $\bigvee\{i(b)\mid \bot\leq \Box b\}$\\
   & = & $\bigvee\{i(b)\mid b\in \mathbb{L}\}$\\
   & = & $i(\bot)$\\
   & = & $1$\\
\end{tabular}
\end{center}
We now show that $i^{-\Box}$ is $\vee$-reversing. For all $a, b \in \mathbb{L}$,
\begin{center}
\begin{tabular}{cll}
   &$i^{-\Box}(a)\wedge i^{-\Box}( b)$\\
    = &$ \bigvee\{i(c_1)\mid a \leq \Box c_1\} \wedge \bigvee\{i(c_2)\mid b \leq\Box c_2\}$\\
        = &$ \bigvee\{i(c_1)\wedge i(c_2)\mid a \leq \Box c_1 \mbox{ and } b \leq \Box c_2\}$ & (frame dist.) \\ 
             = &$ \bigvee\{i(c_1\wedge c_2)\mid a \leq \Box c_1 \mbox{ and }b \leq \Box c_2\}$ & ($i$ is an $\mathbf{A}$-ideal)\\
               $\leq$ &$ \bigvee\{f(c)\mid a \leq \Box c \mbox{ and } b \leq \Box c\}$ & ($\ast\ast$)\\
 = &$\bigvee\{f(c)\mid a \vee b \leq \Box c\}$ & \\
=&$i^{-\Box}(a\vee b)$,\\
 \end{tabular}
 \end{center}
 where the inequality marked with ($\ast\ast$) follows from the fact that $\Box c_1\vee \Box c_2\leq \Box(c_1\vee c_2)$. For the converse inequality, observe that $\bigvee\{i(c) \mid a \leq \Box c \mbox{ and } b \leq \Box c\} \leq \bigvee\{i(c) \mid a \leq \Box c\}$ and $\bigvee\{i(c) \mid a \leq \Box c \mbox{ and } b \leq \Box c\} \leq \bigvee\{i(c) \mid b \leq \Box c\}$, which means
\[
\bigvee\{i(c) \mid a \leq \Box c \mbox{ and } b \leq \Box c\} \leq \bigvee\{i(c) \mid a \leq \Box c\} \wedge \bigvee\{i(c) \mid b \leq \Box c\}. 
\]
Hence, 
\begin{center}
\begin{tabular}{cll}
   &$i^{-\Box}(a \vee b)$\\
    = &$ \bigvee\{i(c)\mid a \vee b\leq \Box c\}$\\
        = &$ \bigvee\{i(c)\mid a \leq \Box c \mbox{ and } b \leq \Box c\}$ & \\ 
             $\leq$ &$ \bigvee\{i(c)\mid a \leq \Box c\} \wedge \bigvee\{i(c)\mid b \leq \Box c\}$ & \\
	= &$i^{-\Box}(a) \wedge i^{-\Box}(b)$.\\
 \end{tabular}
 \end{center}

\noindent We now prove item 4. In the algebra $\mathbf{Fm}$, $i^{-\Box}([\top]) = \bigvee\{i([\phi])\mid [\top] \leq [\Box \phi]\} = \bigvee\{i([\phi])\mid \top \vdash \Box \phi\} = \bigvee\{i([\phi])\mid \top \vdash \phi\} = i([\top]) = 0$. The crucial equality is the third to last, which holds since  $\top \vdash \Box \phi$ iff $\top \vdash \phi$. The right to left implication can be easily derived in 
$\mathbf{L}$. 
For the sake of the left-to-right implication we appeal to the completeness of 
$\mathbf{L}$ 
with respect to the class of all normal lattice expansions of the appropriate signature \cite{CoPa:non-dist} and reason contrapositively. Suppose $\top \not\vdash \phi$. Then, by this completeness theorem, there is a lattice expansion $\mathbb{C}$ and assignment $v$ on $\mathbb{C}$ such that $v(\phi) \neq 1$. Now consider the algebra $\mathbb{C}'$ obtained from $\mathbb{C}$ by adding a new top element $1'$ and extending the $\Box$-operation by declaring $\Box 1' = 1'$. We keep the assignment $v$ unchanged. It is easy to check that $\mathbb{C}'$ is a normal lattice expansion, and that $v(\Box \phi) \leq 1 < 1'$ and hence $\top \not \vdash \Box \phi$.
\end{proof}


\begin{lemma}\label{eq:premagicnew2} 
For any $f\in \mathsf{F}_{\mathbf{A}}(\mathbb{L})$ and any $i\in \mathsf{I}_{\mathbf{A}}(\mathbb{L})$,
\begin{enumerate}
\item $\bigvee_{b\in \mathbb{L}} (f^{-\Diamond}(b)\otimes i(b)) = \bigvee_{a\in \mathbb{L}} (f(a)\otimes i(\Diamond a))$;
\item $\bigvee_{b\in \mathbb{L}} (f(b) \otimes i^{-\Box}(b)) = \bigvee_{a\in \mathbb{L}} (f(\Box a)\otimes i(a))$. 
\end{enumerate}
\end{lemma}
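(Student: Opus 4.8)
The plan is to prove item 1; item 2 is entirely dual (swapping the roles of $\mathbf{A}$-filters with $\mathbf{A}$-ideals, $\Box$ with $\Diamond$, and $f^{-\Diamond}$ with $i^{-\Box}$), so I would dispatch it with a remark that the same argument applies mutatis mutandis. For item 1, the strategy is to expand $f^{-\Diamond}$ by its definition $f^{-\Diamond}(b) = \bigvee\{f(a)\mid \Diamond a\leq b\}$ on the left-hand side and push the resulting supremum through the algebraic operations until the right-hand side appears. So I would start from
\[
\bigvee_{b\in\mathbb{L}}\bigl(f^{-\Diamond}(b)\otimes i(b)\bigr) = \bigvee_{b\in\mathbb{L}}\Bigl(\bigl(\textstyle\bigvee\{f(a)\mid \Diamond a\leq b\}\bigr)\otimes i(b)\Bigr),
\]
then use the fact that $\otimes$ preserves arbitrary joins in its first coordinate (this is where I need $\mathbf{A}$ to be a complete residuated lattice — $\otimes$ is residuated, hence completely join-preserving in each argument) to rewrite the inner term as $\bigvee\{f(a)\otimes i(b)\mid \Diamond a\leq b\}$, and finally reorganize the double join as a single join $\bigvee\{f(a)\otimes i(b)\mid a,b\in\mathbb{L},\ \Diamond a\leq b\}$.

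The heart of the argument is then the inequality in both directions between $\bigvee\{f(a)\otimes i(b)\mid \Diamond a\leq b\}$ and $\bigvee_{a\in\mathbb{L}}\bigl(f(a)\otimes i(\Diamond a)\bigr)$. For $\geq$: for each fixed $a$, taking $b = \Diamond a$ satisfies the constraint $\Diamond a\leq b$, so each summand $f(a)\otimes i(\Diamond a)$ of the right-hand side appears among the summands on the left; hence the left join dominates. For $\leq$: fix $a,b$ with $\Diamond a\leq b$. Since $i$ is an $\mathbf{A}$-ideal it is $\vee$-reversing, hence order-reversing, so $\Diamond a\leq b$ gives $i(b)\leq i(\Diamond a)$; then by monotonicity of $\otimes$ in the second coordinate, $f(a)\otimes i(b)\leq f(a)\otimes i(\Diamond a)\leq \bigvee_{a'}\bigl(f(a')\otimes i(\Diamond a')\bigr)$. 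Taking the join over all such $a,b$ on the left gives $\leq$. Combining the two inequalities with the chain of identities above yields item 1.

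The main obstacle — really the only non-bookkeeping point — is making sure the join-distributivity of $\otimes$ is invoked correctly and that $i$'s order-reversal is the right monotonicity fact to use; both are immediate from the standing assumptions on $\mathbf{A}$ (complete, residuated, so $\otimes$ distributes over arbitrary joins) and the definition of $\mathbf{A}$-ideal. One should also double-check that we never need $f$ or $i$ to be \emph{proper} here — indeed the statement is over $\mathsf{F}_{\mathbf{A}}(\mathbb{L})$ and $\mathsf{I}_{\mathbf{A}}(\mathbb{L})$, but properness plays no role in this particular identity; only the monotonicity of $i$ and the defining property of $f^{-\Diamond}$ (together with completeness and residuation of $\mathbf{A}$) are used. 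I would present the computation as a single displayed chain of equalities and inequalities, annotated on the right with the justification for each step, mirroring the style already used in the proof of Lemma~\ref{lemma:f minus diam is filter}.
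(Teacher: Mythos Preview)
Your proposal is correct and follows essentially the same approach as the paper: expand $f^{-\Diamond}(b)$ by definition, use complete join-preservation of $\otimes$ in its first coordinate to pass to the double join $\bigvee\{f(a)\otimes i(b)\mid \Diamond a\leq b\}$, then handle the two inequalities by instantiating $b=\Diamond a$ for one direction and using that $i$ is order-reversing for the other. The paper writes out item~2 explicitly rather than invoking duality, but the argument there is exactly the dual you describe; your remark that properness of $f$ and $i$ plays no role is also correct.
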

\begin{proof} For the right-to-left inequality of (1) we use the fact that
$f(a)\leq f^{-\Diamond}(\Diamond a)$ implies that  
$f(a)\otimes i(\Diamond a)\leq f^{-\Diamond}(\Diamond a)\otimes i(\Diamond a)$ for every $a\in \mathbb{L}$, which gives $\bigvee_{a\in \mathbb{L}} (f(a)\otimes i(\Diamond a))\leq\bigvee_{b\in \mathbb{L}} (f^{-\Diamond}(b)\otimes i(b)).$ Conversely, to show that \[\bigvee_{b\in \mathbb{L}} (f^{-\Diamond}(b)\otimes i(b))\leq\bigvee_{a\in \mathbb{L}} (f(a)\otimes i(\Diamond a)),\] 
it is enough to show that, for every $b\in \mathbb{L}$,
 \[f^{-\Diamond}(b)\otimes i(b)\leq\bigvee_{a\in \mathbb{L}} (f(a)\otimes i(\Diamond a)),\] 
 or equivalently, by the definition of $f^{-\Diamond}(b)$ and the fact that $\otimes$ is completely join-preserving in its first coordinate,
  \[\bigvee_{\Diamond c\leq b} (f(c)\otimes i(b))\leq\bigvee_{a\in \mathbb{L}} (f(a)\otimes i(\Diamond a)).\] 
  Hence, let $c\in \mathbb{L}$ such that $\Diamond c\leq b$, and let us show that   \[f(c)\otimes i(b)\leq\bigvee_{a\in \mathbb{L}} (f(a)\otimes i(\Diamond a)).\] 
 Since $i$ is $\vee$-reversing, hence order-reversing, $\Diamond c\leq b$ implies $i(b)\leq i(\Diamond c)$. Therefore, 
 \[f(c)\otimes i(b)\leq f(c)\otimes i(\Diamond c)\leq  \bigvee_{a\in \mathbb{L}} (f(a)\otimes i(\Diamond a)),\]
 as required. 

For (2), we use the fact that $i(a)\leq i^{-\Box}(\Box a)$ implies $f(\Box a)\otimes i(a)\leq f(\Box a) \otimes i^{-\Box}(\Box a))$ for all $a \in \mathbb{L}$, from which we obtain 
$ \bigvee_{a\in \mathbb{L}} (f(\Box a)\otimes i(a))\leq \bigvee_{b\in \mathbb{L}} (f(b) \otimes u^{-\Box}(b))$. 
To show that 
\[\bigvee_{b\in \mathbb{L}} (f(b) \otimes i^{-\Box}(b))\leq \bigvee_{a\in \mathbb{L}} (f(\Box a)\otimes i(a))\] 
we can show that for any $b \in \mathbb{L}$ 
\[
f(b) \otimes i^{-\Box}(b)\leq \bigvee_{a\in \mathbb{L}} (f(\Box a)\otimes i(a)). 
\]   
After applying the definition of $i^{-\Box}(b)$ and the fact that $\otimes$ is completely join-preserving in its second coordinate, we obtain the equivalent inequality 
\[
\bigvee_{b \leq \Box c} ( f(b) \otimes i(c))\leq \bigvee_{a\in \mathbb{L}} (f(\Box a)\otimes i(a)). 
\]
Let $c \in \mathbb{L}$ with $b \leq \Box c$. Since $f$ is order-preserving we get 
\[ 
f(b) \otimes i(c)\leq f(\Box c) \otimes i(c) \leq\bigvee_{a\in \mathbb{L}} (f(\Box a)\otimes i(a)). 
\]
\end{proof}
\begin{definition}
\label{def:canonical frame}
Let $\mathbf{Fm}$ be the Lindenbaum--Tarski algebra  of $\mathcal{L}$-formulas.\footnote{In the remainder of this section, we abuse notation and identify formulas with their equivalence class in $\mathbf{Fm}$.}	The {\em canonical} polarity-based $\mathbf{A}$-{\em frame} is the structure  $\mathbb{P} = (\mathsf{F}_{\mathbf{A}}(\mathbf{Fm}), \mathsf{I}_{\mathbf{A}}(\mathbf{Fm}), I, R_{\Diamond}, R_\Box)$  defined as follows:\footnote{Recall that for any set $W$, the $\mathbf{A}$-{\em subsethood} relation between elements of $\mathbf{A}$-subsets of $W$ is the map $S_W:\mathbf{A}^W\times \mathbf{A}^W\to \mathbf{A}$ defined as $S_W(f, g) :=\bigwedge_{w\in W}(f(w)\rightarrow g(w)) $. 
If $S_W(f, g) =1$ we also write $f\subseteq g$. 
}
 
$I: \mathsf{F}_{\mathbf{A}}(\mathbf{Fm})\times \mathsf{I}_{\mathbf{A}}(\mathbf{Fm})\to \mathbf{A}$, $R_\Diamond : \mathsf{I}_{\mathbf{A}}(\mathbf{Fm})\times\mathsf{F}_{\mathbf{A}}(\mathbf{Fm})\to \mathbf{A}$ and $R_{\Box}: \mathsf{F}_{\mathbf{A}}(\mathbf{Fm})\times \mathsf{I}_{\mathbf{A}}(\mathbf{Fm}) \to \mathbf{A}$ 
	are  defined as follows: \[I(f, i): = \bigvee_{\phi\in \mathbf{Fm}}(f(\phi)\otimes i(\phi));\]
	 \[R_\Diamond(i, f) := \bigvee_{\phi\in \mathbf{Fm}}(f^{-\Diamond}(\phi)\otimes  i(\phi)) = \bigvee_{\phi\in \mathbf{Fm}}(f(\phi)\otimes  i(\Diamond\phi));\]
\[
R_{\Box}(f,i):= \bigvee_{\phi \in \mathbf{Fm}} (f(\phi) \otimes i^{-\Box}(\phi)) = \bigvee_{\phi \in \mathbf{Fm}}
(f(\Box \phi)\otimes i(\phi)).
\]
	\end{definition}
	\begin{lemma}
	The structure $\mathbb{P}$ of Definition \ref{def:canonical frame}
 is a polarity-based $\mathbf{A}$-frame.	\end{lemma}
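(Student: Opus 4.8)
The plan is to verify the two defining requirements of an enriched formal $\mathbf{A}$-context. Since $\mathsf{F}_{\mathbf{A}}(\mathbf{Fm})$ and $\mathsf{I}_{\mathbf{A}}(\mathbf{Fm})$ are sets and $I$, $R_\Diamond$, $R_\Box$ are maps of the prescribed types, the triple $(\mathsf{F}_{\mathbf{A}}(\mathbf{Fm}), \mathsf{I}_{\mathbf{A}}(\mathbf{Fm}), I)$ is automatically a formal $\mathbf{A}$-context, so the whole content is the $I$-compatibility of $R_\Box$ and $R_\Diamond$, i.e.\ the stability of $R_{\Box}^{(0)}[\{\alpha/i\}]$, $R_{\Box}^{(1)}[\{\alpha/f\}]$, $R_{\Diamond}^{(0)}[\{\alpha/f\}]$ and $R_{\Diamond}^{(1)}[\{\alpha/i\}]$ for all $\alpha\in\mathbf{A}$, $f\in\mathsf{F}_{\mathbf{A}}(\mathbf{Fm})$ and $i\in\mathsf{I}_{\mathbf{A}}(\mathbf{Fm})$. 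I would record at the outset the two facts used throughout: an $\mathbf{A}$-subset of the object set (resp.\ of the attribute set) is stable precisely when it lies in the image of $(\cdot)^{\downarrow}$ (resp.\ of $(\cdot)^{\uparrow}$), and these images are closed under arbitrary meets, since $\rightarrow$ turns joins in its first coordinate into meets; also $\bot^{\mathbf{A}}\rightarrow\beta=\top^{\mathbf{A}}$, so evaluating $R^{(0)}_\Box[-]$, $R^{(1)}_\Box[-]$, etc.\ at a singleton $\{\alpha/z\}$ collapses the defining meet to a single term.

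The first reduction is to rewrite the modal relations in terms of $I$. By Definition \ref{def:canonical frame} and the definition of $I$, for all $f$ and $i$ we have $R_\Box(f,i)=\bigvee_{\phi}(f(\phi)\otimes i^{-\Box}(\phi))=I(f,i^{-\Box})$ and $R_\Diamond(i,f)=\bigvee_{\phi}(f^{-\Diamond}(\phi)\otimes i(\phi))=I(f^{-\Diamond},i)$, where by items (2) and (4) of Lemma \ref{lemma:f minus diam is filter}, $i^{-\Box}$ is again a proper $\mathbf{A}$-ideal and $f^{-\Diamond}$ a proper $\mathbf{A}$-filter of $\mathbf{Fm}$, so that $i\mapsto i^{-\Box}$ and $f\mapsto f^{-\Diamond}$ are well-defined self-maps of the attribute set and the object set. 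Two of the four conditions follow at once: unravelling the definitions, $R_{\Box}^{(0)}[\{\alpha/i\}](f)=\alpha\rightarrow R_\Box(f,i)=\alpha\rightarrow I(f,i^{-\Box})=\{\alpha/i^{-\Box}\}^{\downarrow}(f)$, so $R_{\Box}^{(0)}[\{\alpha/i\}]=\{\alpha/i^{-\Box}\}^{\downarrow}$ is stable; symmetrically $R_{\Diamond}^{(0)}[\{\alpha/f\}]=\{\alpha/f^{-\Diamond}\}^{\uparrow}$ is stable.

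The substance lies in the remaining pair $R_{\Box}^{(1)}[\{\alpha/f\}]$ and $R_{\Diamond}^{(1)}[\{\alpha/i\}]$, where the ``twisted'' argument $i^{-\Box}$ (resp.\ $f^{-\Diamond}$) now sits in the coordinate over which the Galois closure must be taken, so the shortcut no longer applies. Here I would argue directly that these $\mathbf{A}$-subsets coincide with their double Galois dual (the other inclusion being generic for any Galois connection). The tools are the ``point'' $\mathbf{A}$-filters $f_\psi$ and ``point'' $\mathbf{A}$-ideals $i_\psi$ --- the characteristic functions of the principal up-set $\uparrow\psi$, resp.\ down-set $\downarrow\psi$, of $\mathbf{Fm}$, which are proper exactly when $\psi\not\vdash\bot$, resp.\ $\top\not\vdash\psi$ --- together with the identities $I(f_\psi,i)=i(\psi)$ and $I(f,i_\psi)=f(\psi)$, which hold since $i$ is order-reversing and $f$ order-preserving. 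Concretely, for $u:=R_{\Box}^{(1)}[\{\alpha/f\}]$ one has $u(i)=\alpha\rightarrow R_\Box(f,i)=\alpha\rightarrow\bigvee_{\phi}(f(\Box\phi)\otimes i(\phi))$ (using the second form of $R_\Box$ in Definition \ref{def:canonical frame}); one then computes $u^{\downarrow}$ on the point filters, bounds $u^{\downarrow\uparrow}(i_0)$ from above by restricting the defining meet to point filters, and closes the inequality $u^{\downarrow\uparrow}\subseteq u$ by residuation and the (dual) frame-distributivity of $\mathbf{A}$. The case of $R_{\Diamond}^{(1)}[\{\alpha/i\}]$ is order-dual, via point ideals and the first form of $R_\Diamond$. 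This step also uses the normality identities $\Box\top\equiv\top$, $\Box(\phi\wedge\psi)\equiv\Box\phi\wedge\Box\psi$, $\Diamond\bot\equiv\bot$, $\Diamond(\phi\vee\psi)\equiv\Diamond\phi\vee\Diamond\psi$ in $\mathbf{Fm}$ (all derivable in $\mathbf{L}$), and, where properness of the point structures in play has to be secured, the consistency and disjunction facts in items (5) and (6) of Lemma \ref{lemma:f minus diam is filter}.

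I expect this last step --- the two ``$R^{(1)}$'' compatibility conditions --- to be the main obstacle: it is the point-free, many-valued surrogate for the prime-filter argument of classical Stone-type completeness proofs, and it is where the careful bookkeeping with point $\mathbf{A}$-filters/ideals, residuation and frame-distributivity must be carried out, whereas the two ``$R^{(0)}$'' conditions and the reduction via Lemma \ref{lemma:f minus diam is filter} are routine by comparison. Once all four stability conditions are established, $R_\Box$ and $R_\Diamond$ are $I$-compatible, so $\mathbb{P}$ is an enriched formal $\mathbf{A}$-context, i.e.\ a polarity-based $\mathbf{A}$-frame.
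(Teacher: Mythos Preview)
Your overall plan is sound, and the observation that $R_\Box(f,i)=I(f,i^{-\Box})$ and $R_\Diamond(i,f)=I(f^{-\Diamond},i)$ immediately dispatches the two ``$R^{(0)}$'' stability conditions is both correct and a cleaner packaging than the paper gives: the paper reaches the same witnesses $i^{-\Box}$ and $f^{-\Diamond}$, but spells out the reduction to the residuated-lattice tautology $\beta\leq(\beta\to x)\to x$ rather than noting directly that the map in question equals $\{\alpha/i^{-\Box}\}^{\downarrow}$, resp.\ $\{\alpha/f^{-\Diamond}\}^{\uparrow}$.

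Where you diverge from the paper is in the ``$R^{(1)}$'' cases, and there you make life harder than necessary. The paper does \emph{not} pass through families of point filters and point ideals; it applies the very same single-witness trick in the other coordinate. For $R_\Box^{(1)}[\{\alpha/f\}]$ the witness is the map $f'$ given by $f'(\phi)=f(\Box\phi)$ for $\phi\not\vdash\bot$ and $f'(\phi)=0$ otherwise: normality of $\Box$ together with item (6) of Lemma~\ref{lemma:f minus diam is filter} makes $f'$ a proper $\mathbf{A}$-filter, and one checks $I(f',i)=R_\Box(f,i)$ for every $i$, so that $R_\Box^{(1)}[\{\alpha/f\}]=\{\alpha/f'\}^{\uparrow}$ is manifestly stable and the inequality again collapses to $\beta\leq(\beta\to x)\to x$. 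Dually, for $R_\Diamond^{(1)}[\{\alpha/i\}]$ the witness is $i'(\phi)=i(\Diamond\phi)$ for $\top\not\vdash\phi$ and $0$ otherwise, with item (5) securing that $i'$ is a proper $\mathbf{A}$-ideal.

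Your point-filter route may be completable, but as sketched it leaves the decisive step unspecified: after restricting the meet defining $u^{\downarrow\uparrow}(i_0)$ to point filters $f_\chi$ you need a \emph{lower} bound on $u^{\downarrow}(f_\chi)$ (since $\to$ is antitone in its first argument), and computing $u^{\downarrow}(f_\chi)=\bigwedge_i(u(i)\to i(\chi))$ over all proper $\mathbf{A}$-ideals is exactly the hard part; (dual) frame-distributivity alone does not give $\alpha\to\bigvee_j x_j=\bigvee_j(\alpha\to x_j)$, so the ``close by residuation and frame-distributivity'' step is not routine. I would rework the $R^{(1)}$ cases along the paper's lines --- it is literally the mechanism you already discovered for $R^{(0)}$, transported to the other coordinate via the maps $\phi\mapsto f(\Box\phi)$ and $\phi\mapsto i(\Diamond\phi)$ (modified for properness).
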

	\begin{proof}
We need to show that $R_{\Diamond}$ is $I$-compatible, i.e.,
\begin{align*}
(R_\Diamond^{(0)}[\{\beta / f \}])^{\downarrow\uparrow} &\subseteq R_\Diamond^{(0)}[\{\beta /  f \}] \\
(R_\Diamond^{(1)}[\{\beta /  i \}])^{\uparrow\downarrow} &\subseteq R_\Diamond^{(1)}[\{\beta / i \}],
\end{align*}
and that $R_{\Box}$ is $I$-compatible, i.e.,
\begin{align*}
(R_\Box^{(0)}[\{\beta /  i \}])^{\uparrow\downarrow} &\subseteq R_\Box^{(0)}[\{\beta / i \}] \\
(R_\Box^{(1)}[\{\beta /  f \}])^{\downarrow\uparrow} &\subseteq R_\Box^{(1)}[\{\beta /  f \}].
\end{align*}
Considering the second inclusion for $R_{\Diamond}$, by definition, for any $i\in\mathsf{I}_{\mathbf{A}}(\mathbf{Fm})$,
\begin{center}
\begin{tabular}{c l}
&$R_{\Diamond}^{(1)}[\{\beta / i \}](f)$ \\
 = & $\bigwedge_{i'\in \mathsf{I}_{\mathbf{A}}(\mathbf{Fm})}[\{\beta / i \}(i')\to  R_{\Diamond}(i',f)]$\\
  = & $\beta\to R_{\Diamond}(i, f)$\\
\end{tabular}
\end{center}
and
\begin{center}
\begin{tabular}{c l}
 & $(R_\Diamond^{(1)}[\{\beta /  i \}])^{\uparrow\downarrow} (f)$\\
 = & $\bigwedge_{i'\in \mathsf{I}_{\mathbf{A}}(\mathbf{Fm})}[(R_\Diamond^{(1)}[\{\beta /  i \}])^{\uparrow}(i')\to I(f, i') ],$ \\
\end{tabular}
\end{center}
and hence it is enough to find some $i'\in \mathsf{I}_{\mathbf{A}}(\mathbf{Fm})$ such that 
\[(R_\Diamond^{(1)}[\{\beta /  i \}])^{\uparrow}(i')\to I(f, i')\leq \beta\to R_{\Diamond}(i, f),\]
i.e.
\begin{center}
\begin{tabular}{cl l}
&$\bigwedge_{f'\in \mathsf{F}_{\mathbf{A}}(\mathbf{Fm})}\left([\beta\to R_\Diamond(i,f')]\to I(f',i')\right)\to I(f,i')\leq \beta\to R_{\Diamond}(i,f) $ & ($\ast$)\\
\end{tabular}
\end{center}
Let $i': \mathbf{Fm}\to\mathbf{A}$ be  defined by the assignment 
\[ 
i'(\phi) =
\left\{ \begin{array}{ll} 
0 &\mbox{if $\top \vdash \phi$} \\
i(\Diamond\phi) &\mbox{otherwise.}
\end{array} \right.
\]
Using Lemma \ref{lemma:f minus diam is filter} it can be readily verified that $i'$ is an $\mathbf{A}$-ideal. 
Moreover, 
\begin{center}
	\begin{tabular}{rcl}
		$I(f, i')$ & = & $\bigvee_{\phi\in \mathbf{Fm}} (f(\phi)\otimes i'(\phi))$\\
		& = & $\bigvee_{\phi\in \mathbf{Fm}} (f(\phi)\otimes i(\Diamond \phi))$\\
		& = & $\bigvee_{\phi\in \mathbf{Fm}} (f^{-\Diamond}(\phi)\otimes i(\phi))$\\
		& = & $R_{\Diamond}(i, f)$,\\
	\end{tabular}
\end{center}
and likewise $I(f', i') = R_{\Diamond}(i, f')$. Therefore, for this choice of $i'$, inequality ($\ast$) can be rewritten as follows:
\begin{center}
	\begin{tabular}{cl l}
		&$\bigwedge_{f'\in \mathsf{F}_{\mathbf{A}}(\mathbf{Fm})}\left([\beta\to R_\Diamond(i,f')]\to R_{\Diamond}(i,f')\right)\to R_{\Diamond}(i,f)\leq \beta\to R_{\Diamond}(i,f) $ & \\
	\end{tabular}
\end{center}
The inequality above is true if \[\beta \leq \bigwedge_{f'\in \mathsf{F}_{\mathbf{A}}(\mathbf{Fm})}\left([\beta\to R_\Diamond(i,f')]\to R_{\Diamond}(i,f')\right),\]
i.e.~if for every $f'\in \mathsf{F}_{\mathbf{A}}(\mathbf{Fm})$, 
\[\beta \leq [\beta\to R_\Diamond(i,f')]\to R_{\Diamond}(i,f'),\]
which is an instance of a tautology in residuated lattices.

Let us show that $(R_\Diamond^{(0)}[\{\beta / f \}])^{\downarrow\uparrow} \subseteq R_\Diamond^{(0)}[\{\beta / f \}]$. By definition, for every $f\in \mathsf{I}_{\mathbf{A}}(\mathbf{Fm})$,
\begin{center}
\begin{tabular}{cl}
& $R_\Diamond^{(0)}[\{\beta /  f\}] (i)$\\
= &$\bigwedge_{f'\in \mathsf{F}_{\mathbf{A}}(\mathbf{Fm})}[\{\beta /  f\}(f')\to R_{\Diamond}(i,f')]$\\
= &$\beta\to R_{\Diamond}(i,f)$\\
\end{tabular}
\end{center}
and
\begin{center}
\begin{tabular}{c l}
& $(R_\Diamond^{(0)}[\{\beta / f \}])^{\downarrow\uparrow}(i)$\\
= &$\bigwedge_{f'\in \mathsf{F}_{\mathbf{A}}(\mathbf{Fm})}[(R_\Diamond^{(0)}[\{\beta / f \}])^{\downarrow}(f')\to I(f', i)]$.\\
\end{tabular}
\end{center}
Hence, it is enough to find some $f'\in \mathsf{F}_{\mathbf{A}}(\mathbf{Fm})$ such that 
\[(R_\Diamond^{(0)}[\{\beta / f \}])^{\downarrow}(f')\to I(f', i)\leq \beta\to R_{\Diamond}(i, f),\]
i.e.
\begin{center}
\begin{tabular}{cll}
& $\bigwedge_{i'\in \mathsf{I}_{\mathbf{A}}(\mathbf{Fm})}\left([\beta\to R_{\Diamond}(i', f)]\to I(f', i')\right)\to I(f', i)\leq \beta\to R_{\Diamond}(i, f)$ & \\
\end{tabular}
\end{center}
Let $f'\in \mathsf{F}_{\mathbf{A}}(\mathbf{Fm})$ such that $f' = f^{-\Diamond}$ (cf.~Lemma \ref{lemma:f minus diam is filter}). Then
\begin{center}
	\begin{tabular}{rcl}
		$I(f', i)$ & = & $\bigvee_{\phi\in \mathbf{Fm}} \left(f^{-\Diamond}(\phi)\otimes i(\phi)\right)$\\
		& = & $R_{\Diamond}(i, f)$,\\
	\end{tabular}
\end{center}
and likewise $I(f',i') = R_{\Diamond}(i', f)$. Therefore, for this choice of $f'$, 
\begin{center}
	\begin{tabular}{cll}
		& $\bigwedge_{i'\in \mathsf{I}_{\mathbf{A}}(\mathbf{Fm})}\left([\beta\to R_{\Diamond}(i', f)]\to R_{\Diamond}(i', f)\right)\to R_{\Diamond}(i, f)\leq \beta\to R_{\Diamond}(i, f)$ & \\
	\end{tabular}
\end{center}
which is shown to be true by the same argument as the one concluding the verification of  the previous inclusion.

For the second inclusion involving $R_\Box$, it is helpful to first observe that for any $f \in \mathsf{F}_{\mathbf{A}}(\mathbf{Fm})$: 
\begin{center}
\begin{tabular}{c l}
&$R_{\Box}^{(1)}[\{\beta / f \}](\alpha,w)$ \\
 = & $\bigwedge_{f'\in\mathsf{F}_{\mathbf{A}}(\mathbf{Fm})}(\{\beta / f \}(f')\to  R_{\Box}(f',i))$\\
= & $\beta\to R_{\Box}(f, i)$\\
\end{tabular}
\end{center}
and
\begin{center}
\begin{tabular}{c l}
 & $(R_\Box^{(1)}[\{\beta /  f \}])^{\downarrow\uparrow} (i)$\\
 = & $\bigwedge_{z'\in Z_A}((R_\Box^{(1)}[\{\beta / f \}])^{\downarrow}(f')\to I(f', i))$. \\
\end{tabular}
\end{center}
and hence it is enough to find some $f'\in \mathsf{F}_{\mathbf{A}}(\mathbf{Fm})$ such that 
\[(R_\Box^{(1)}[\{\beta / f \}])^{\downarrow}(f')\to I(f', i)\leq \beta\to R_{\Box}(f, i),\]
i.e.
\begin{center}
\begin{tabular}{cl l}
&$\bigwedge_{i'\in \mathsf{I}_{\mathbf{A}}(\mathbf{Fm})}\left([\beta\to R_\Box(f,i')]\to I(f',i')\right)\to I(f',i)\leq \beta\to R_{\Box}(f, i) $ & ($\ast\ast$)\\
\end{tabular}
\end{center}
Let $f': \mathbf{Fm}\to\mathbf{A}$ be  defined by the assignment 
\[ 
i'(\phi) =
\left\{ \begin{array}{ll} 
0 &\mbox{if $\phi \vdash \bot$} \\
i(\Box\phi) &\mbox{otherwise.}
\end{array} \right.
\]
Using Lemma \ref{lemma:f minus diam is filter} it can be verified that $f'$ is an $\mathbf{A}$-filter. 
Moreover, 
\begin{center}
	\begin{tabular}{rcl}
		$I(f', i)$ & = & $\bigvee_{\phi\in \mathbf{Fm}} (f'(\phi)\otimes i(\phi))$\\
		& = & $\bigvee_{\phi\in \mathbf{Fm}} (f(\Box\phi)\otimes i(\phi))$\\
		& = & $\bigvee_{\phi\in \mathbf{Fm}} (f(\phi)\otimes i^{-\Box}(\phi))$\\
		& = & $R_{\Box}(f, i)$,\\
	\end{tabular}
\end{center}
and likewise $I(f', i') = R_{\Box}(f, i')$. Therefore, for this choice of $f'$, inequality ($\ast\ast$) can be rewritten as follows:
\begin{center}
	\begin{tabular}{cl l}
		&$\bigwedge_{i'\in \mathsf{I}_{\mathbf{A}}(\mathbf{Fm})}\left([\beta\to R_\Box(f, i')]\to R_{\Box}(f, i')\right)\to R_{\Box}(f, i)\leq \beta\to R_{\Box}(f, i) $ & \\
	\end{tabular}
\end{center}
The inequality above is true if \[\beta \leq \bigwedge_{i'\in \mathsf{I}_{\mathbf{A}}(\mathbf{Fm})}\left([\beta\to R_\Box(f, i')]\to R_{\Box}(f, i')\right),\]
i.e.~if for every $i'\in \mathsf{i}_{\mathbf{A}}(\mathbf{Fm})$, 
\[\beta \leq [\beta\to R_\Diamond(f,i')]\to R_{\Diamond}(f, i'),\]
which is an instance of a tautology in residuated lattices.
%

Let us show that $(R_\Box^{(0)}[\{\beta / i \}])^{\uparrow\downarrow} \subseteq R_\Box^{(0)}[\{\beta / i\}]$. By definition, for every $i\in \mathsf{I}_{\mathbf{A}}(\mathbf{Fm})$,
\begin{center}
\begin{tabular}{cl}
& $R_\Box^{(0)}[\{\beta / i\}] (f)$\\
= &$\bigwedge_{i'\in \mathsf{I}_{\mathbf{A}}(\mathbf{Fm})}[\{\beta / i \}(i')\to R_{\Box}(f, i')]$\\
= &$\beta\to R_{\Box}(f, i)$\\
\end{tabular}
\end{center}
and
\begin{center}
\begin{tabular}{c l}
& $(R_\Box^{(0)}[\{\beta / i \}])^{\uparrow\downarrow}(f)$\\
= &$\bigwedge_{i'\in \mathsf{I}_{\mathbf{A}}(\mathbf{Fm})}[(R_\Box^{(0)}[\{\beta / i \}])^{\uparrow}(i')\to I(f, i')]$.\\
\end{tabular}
\end{center}
Hence, it is enough to find some $i'\in \mathsf{I}_{\mathbf{A}}(\mathbf{Fm})$ such that 
\[(R_\Box^{(0)}[\{\beta / i \}])^{\uparrow}(i')\to I(f, i')\leq \beta\to R_{\Box}(f, i),\]
i.e.
\begin{center}
\begin{tabular}{cll}
& $\bigwedge_{f'\in \mathsf{F}_{\mathbf{A}}(\mathbf{Fm})}\left([\beta\to R_{\Box}(f', i)]\to I(f', i')\right)\to I(f, i')\leq \beta\to R_{\Box}(f, i)$ & \\
\end{tabular}
\end{center}
Let $i'\in \mathsf{I}_{\mathbf{A}}(\mathbf{Fm})$ such that $i' = i^{-\Box}$ (cf.~Lemma \ref{lemma:f minus diam is filter}). Then
\begin{center}
	\begin{tabular}{rcl}
		$I(f, i')$ & = & $\bigvee_{\phi\in \mathbf{Fm}}\left(f(\phi)\otimes i^{-\Box}(\phi)\right)$\\
		& = & $R_{\Box}(f, i)$,\\
	\end{tabular}
\end{center}
and likewise $I(f',i') = R_{\Box}(f', i)$. Therefore, for this choice of $i'$, 
\begin{center}
	\begin{tabular}{cll}
		& $\bigwedge_{f'\in \mathsf{F}_{\mathbf{A}}(\mathbf{Fm})}\left([\beta\to R_{\Box}(f', i)]\to R_{\Box}(f', i)\right)\to R_{\Box}(f, i)\leq \beta\to R_{\Box}(f, i)$ & \\
	\end{tabular}
\end{center}
which is shown to be true by the same argument as the one concluding the verification of  the previous inclusion. 
%
\end{proof}

\begin{definition}
\label{def:canonical model}
Let $\mathbf{Fm}$ be the Lindenbaum--Tarski algebra of $\mathcal{L}$-formulas.	The {\em canonical graph-based} $\mathbf{A}$-{\em model} is the structure  $\mathbb{M} =(\mathbb{P}, V)$ such that $\mathbb{P}$ is the canonical graph-based $\mathbf{A}$-frame of Definition \ref{def:canonical frame}, and  if $p \in \mathsf{Prop}$, then $V(p) = (\val{p}, \descr{p})$ with $\val{p}: \mathsf{F}_{\mathbf{A}}(\mathbf{Fm})\to \mathbf{A}$ and $\descr{p}: \mathsf{I}_{\mathbf{A}}(\mathbf{Fm})\to \mathbf{A}$ defined by  $f\mapsto f(p)$ and  $i\mapsto i(p)$, respectively. 
\end{definition}
\begin{lemma}
	The structure $\mathbb{P}$ of Definition \ref{def:canonical model}
 is a polarity-based $\mathbf{A}$-model.
	\end{lemma}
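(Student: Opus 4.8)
The plan is to verify that the structure $\mathbb{M} = (\mathbb{P}, V)$ of Definition~\ref{def:canonical model} satisfies the two requirements of a many-valued polarity-based model: first, that $\mathbb{P}$ is a polarity-based $\mathbf{A}$-frame --- which has already been established in the preceding lemma --- and second, that $V$ is a well-defined valuation, i.e.~that for each $p \in \mathsf{Prop}$ the pair $V(p) = (\val{p}, \descr{p})$ is genuinely a formal $\mathbf{A}$-concept of $\mathbb{P}$, that is, an element of $\mathbb{P}^+$. So the only real content to check is that $\val{p}^\uparrow = \descr{p}$ and $\descr{p}^\downarrow = \val{p}$, where $\val{p}(f) = f(p)$ and $\descr{p}(i) = i(p)$ and the maps $(\cdot)^\uparrow, (\cdot)^\downarrow$ are those induced by the canonical relation $I(f,i) = \bigvee_{\phi}(f(\phi)\otimes i(\phi))$.

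First I would unfold the definitions. We have $\val{p}^\uparrow(i) = I^{(1)}[\val{p}](i) = \bigwedge_{f \in \mathsf{F}_{\mathbf{A}}(\mathbf{Fm})}(f(p) \to I(f,i))$, and we must show this equals $i(p)$. For the inequality $i(p) \leq \val{p}^\uparrow(i)$, it suffices to observe that for every proper $\mathbf{A}$-filter $f$ we have $f(p) \otimes i(p) \leq \bigvee_\phi (f(\phi)\otimes i(\phi)) = I(f,i)$, and hence by residuation $i(p) \leq f(p) \to I(f,i)$; taking the meet over all $f$ gives the claim. For the converse inequality $\val{p}^\uparrow(i) \leq i(p)$, I would instantiate the meet at a cleverly chosen $\mathbf{A}$-filter --- the natural candidate is the principal $\mathbf{A}$-filter $\hat p$ generated by $p$, defined by $\hat p(\phi) = 1$ if $p \vdash \phi$ and $\hat p(\phi) = 0$ otherwise (one checks this is a proper $\mathbf{A}$-filter using that $p \not\vdash \bot$, or handling the degenerate case separately). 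With this choice, $I(\hat p, i) = \bigvee\{i(\phi) \mid p \vdash \phi\} = i(p)$ since $i$ is order-reversing and $p$ is the $\vdash$-least such $\phi$; and $\hat p(p) = 1$, so the corresponding conjunct is $1 \to i(p) = i(p)$ (using $1 \to \alpha = \alpha$), whence $\val{p}^\uparrow(i) \leq i(p)$. The argument for $\descr{p}^\downarrow = \val{p}$ is entirely dual, using the principal $\mathbf{A}$-ideal $\check p$ generated by $p$ given by $\check p(\phi) = 0$ if $\phi \vdash p$ and $1$ otherwise, which is a proper $\mathbf{A}$-ideal since $\top \not\vdash p$ (again the degenerate case, where $\top \vdash p$, can be treated directly).

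The main obstacle --- really the only subtlety --- is the well-definedness and properness of these principal $\mathbf{A}$-filters and $\mathbf{A}$-ideals, together with the edge cases where $p$ is provably equivalent to $\bot$ or to $\top$ (in which case the ``generated'' filter or ideal would fail to be proper and one must instead argue directly that $\val{p}$ and $\descr{p}$ are the top or bottom concept, which is straightforward). Everything else is a routine residuation computation in the complete residuated lattice $\mathbf{A}$, invoking complete frame-distributivity to move joins past $\otimes$ and the identity $1 \to \alpha = \alpha$. Since $V(p) \in \mathbb{P}^+$ for all $p$, its homomorphic extension to all of $\mathcal{L}$ lands in $\mathbb{P}^+ = \mathbb{F}^+$ by Lemma~\ref{prop:fplus}, so $\mathbb{M}$ is a conceptual $\mathbf{A}$-model as required.
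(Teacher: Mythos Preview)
Your approach is essentially the paper's: establish $\val{p}^\uparrow = \descr{p}$ via residuation for one inequality and instantiation at the principal filter $\hat{p}$ (the paper's $f_p$) for the other, then dualize. The paper saves one step by invoking the Galois adjunction ($\descr{p} \leq \val{p}^\uparrow$ immediately yields $\val{p} \leq \descr{p}^\downarrow$), but your symmetric treatment is equally valid.

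One slip to fix: your $\check{p}$ has its values swapped. With $\check{p}(\phi) = 0$ for $\phi \vdash p$ you get $\check{p}(\bot) = 0$, so $\check{p}$ is not $\bot$-reversing and hence not an $\mathbf{A}$-ideal at all; worse, $\check{p}(p) = 0$ makes the instantiated conjunct $0 \to I(f,\check{p}) = 1$, which bounds nothing. You need $\check{p}(\phi) = 1$ if $\phi \vdash p$ and $0$ otherwise (exactly the paper's $i_p$), so that $\check{p}(p) = 1$ and $I(f,\check{p}) = \bigvee_{\phi \vdash p} f(\phi) = f(p)$ by monotonicity of $f$. Your attention to properness is actually more than the paper offers; for atomic $p$ in the basic logic the degenerate cases $p \vdash \bot$ and $\top \vdash p$ cannot occur, and the paper simply passes over the issue.
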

	\begin{proof}
It is enough to show that $\val{p}^{\uparrow}=\descr{p}$ and $\val{p}=\descr{p}^{\downarrow}$  for any $p \in \mathsf{Prop}$.  To show that $\descr{p} (i)\leq \val{p}^{\uparrow}(i)$ for any $i\in \mathsf{I}_{\mathbf{A}}(\mathbf{Fm})$, by definition, we need to show that
\[i(p)\leq \bigwedge_{f\in \mathsf{F}_{\mathbf{A}}(\mathbf{Fm})}(\val{p}(f)\to I(f,i)),\]
i.e.~that for every $f\in \mathsf{F}_{\mathbf{A}}(\mathbf{Fm})$, 
\[i(p)\leq \val{p}(f)\to I(f, i).\]
By definition, the inequality above is equivalent to
\[i(p)\leq f(p)\to \bigvee_{\phi\in \mathbf{Fm}} [f(\phi)\otimes i(\phi)].\]
Since $f(p)\otimes i(p)\leq \bigvee_{\phi\in \mathbf{Fm}} [f(\phi)\otimes i(\phi)]$ 
it is enough to show that 
\[i(p)\leq f(p)\to (f(p)\otimes i(p)).\]
By residuation the inequality above is equivalent to
\[i(p)\otimes f(p) \leq f(p)\otimes i(p),\]
which is the instance of a tautology in residuated lattices.
Conversely, to show that $ \val{p}^{\uparrow}(i)\leq \descr{p} (i)$, i.e.
\[\bigwedge_{f\in \mathsf{F}_{\mathbf{A}}(\mathbf{Fm})}(\val{p}(f)\to I(f, i))\leq i(p),\]
it is enough to show that
\begin{equation}
\label{eqq}
\val{p}(f)\to I(f, i)\leq i(p)
\end{equation}
for some $f\in \mathsf{F}_{\mathbf{A}}(\mathbf{Fm})$. Let  $f_p:\mathbf{Fm}\to \mathbf{A}$ be defined by the assignment 
\[
f_p(\phi) = \left\{\begin{array}{ll}
1 & \text{if }  p\vdash \phi\\
0 & \text{otherwise. } \\
\end{array}\right.
\]
 Hence, $I(f_p, i) = \bigvee_{\phi\in \mathbf{Fm}}(f(\phi)\otimes i(\phi))=\bigvee_{p\vdash\phi}i(\phi) = i(p)$, the last equality holding since $i$ is order-reversing.  
 Therefore,  
 $\val{p}(f)\to I(f_p, i) = f_p(p)\to i(p) = 1\to i(p) = i(p)$, which shows  \eqref{eqq}.
 
By adjunction, the inequality $\descr{p} \leq \val{p}^{\uparrow}$ proven above implies that   $\val{p}\leq\descr{p}^{\downarrow}$. Hence, to show that  $\val{p}=\descr{p}^{\downarrow}$, it is enough to show $\descr{p}^{\downarrow}(f)\leq\val{p}(f)$ for every $f\in \mathsf{F}_{\mathbf{A}}(\mathbf{Fm})$, i.e. 
\[\bigwedge_{i\in \mathsf{I}_{\mathbf{A}}(\mathbf{Fm})} \descr{p}(i)\to I(f, i)\leq f(p),\]
and to show the inequality above holds, it is enough to show that 
\begin{equation}\label{eqqq} \descr{p}(i)\to I(f, i)\leq f(p)\end{equation}
for some $i\in \mathsf{I}_{\mathbf{A}}(\mathbf{Fm})$.
Let 
$i_{p}: \mathbf{Fm}\to \mathbf{A}$ be defined by the following assignment:
 \[
i_{p}(\phi) = \left\{\begin{array}{ll}
1 & \text{if } \phi\vdash p \\
0 & \text{if } \phi\not\vdash p.
\end{array}\right.
\]
By construction, $i_{p}$ is $\vee$-, $\bot$- and $\top$-reversing. Moreover, $\descr{p}(i_{p}) = i_{p}(p) = 1$, and $I(f, i_{p}) = \bigvee_{\phi\in \mathbf{Fm}}(f(\phi)\otimes i_{p}(\phi))  = \bigvee_{\phi\vdash p}(f(\phi)\otimes 1)  = \bigvee_{\phi\vdash p}f(\phi)= f(p)$, where the last equality follows from the fact that $f$ is order preserving. 
 Hence, the left-hand side of \eqref{eqqq} can be equivalently rewritten as $1\to f(p) = f(p)$, which shows \eqref{eqqq} and concludes the proof.
\end{proof}
\begin{lemma}[Truth Lemma]
\label{lemma:truthlemma}
	For every  $\phi\in \mathbf{Fm}$, the maps $\val{\phi}: \mathsf{F}_{\mathbf{A}}(\mathbf{Fm})\to \mathbf{A}$ and $\descr{\phi}: \mathsf{I}_{\mathbf{A}}(\mathbf{Fm})\to \mathbf{A}$ coincide with those defined by the assignments $f\mapsto f(\phi)$ and $u\mapsto i(\phi)$, respectively. 
\end{lemma}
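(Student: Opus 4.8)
The plan is to prove the two equalities $\val\phi=g_\phi$ and $\descr\phi=h_\phi$ simultaneously by induction on the complexity of $\phi$, where for $\theta\in\mathbf{Fm}$ I write $g_\theta:\mathsf{F}_{\mathbf{A}}(\mathbf{Fm})\to\mathbf{A}$ and $h_\theta:\mathsf{I}_{\mathbf{A}}(\mathbf{Fm})\to\mathbf{A}$ for the maps $f\mapsto f(\theta)$ and $i\mapsto i(\theta)$. The key preliminary move is to isolate a ``coordinate-switching'' fact, so that in each inductive step only one of the two coordinates needs to be computed directly and the other is obtained for free.

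\emph{Auxiliary fact.} First I would show that for \emph{every} $\theta\in\mathbf{Fm}$ the pair $(g_\theta,h_\theta)$ is a formal $\mathbf{A}$-concept of the canonical polarity, i.e.~$g_\theta^{\uparrow}=h_\theta$ and $h_\theta^{\downarrow}=g_\theta$. This repeats the computation already carried out for atomic propositions in the lemma preceding Definition~\ref{def:canonical model}, now run for arbitrary $\theta$ and using the principal filter $f_\theta$ ($f_\theta(\chi)=1$ iff $\theta\vdash\chi$, else $0$) and principal ideal $i_\theta$ ($i_\theta(\chi)=1$ iff $\chi\vdash\theta$, else $0$). For $g_\theta^{\uparrow}(i)=\bigwedge_f(f(\theta)\to I(f,i))=i(\theta)$ the inequality $\geq$ is immediate from $I(f,i)=\bigvee_\chi(f(\chi)\otimes i(\chi))\geq f(\theta)\otimes i(\theta)$ together with residuation, and $\leq$ follows by instantiating $f:=f_\theta$, since $I(f_\theta,i)=\bigvee_{\theta\vdash\chi}i(\chi)=i(\theta)$ by order-reversal of $i$; when $\theta\vdash\bot$ the filter $f_\theta$ need not be proper, but then $f(\theta)=f(\bot)=0$ for every proper $f$ and $i(\theta)=i(\bot)=1$, so both sides equal $1$. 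The identity $h_\theta^{\downarrow}=g_\theta$ is dual, its degenerate subcase being $\top\vdash\theta$, where $i(\theta)=i(\top)=0$ and $f(\theta)=f(\top)=1$.

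\emph{Induction.} The base case $\phi=p$ is exactly Definition~\ref{def:canonical model}. For $\phi=\top$ the value coordinate $\val\top$ is the constant $1$, which equals $g_\top$ since $\mathbf{A}$-filters preserve $\top$, and then $\descr\top=\val\top^{\uparrow}=g_\top^{\uparrow}=h_\top$ by the auxiliary fact; $\phi=\bot$ is dual ($\mathbf{A}$-ideals reverse $\bot$). For $\phi=\phi_1\wedge\phi_2$ the induction hypothesis gives $\val{\phi_1\wedge\phi_2}=\val{\phi_1}\wedge\val{\phi_2}=g_{\phi_1}\wedge g_{\phi_2}=g_{\phi_1\wedge\phi_2}$ (the last step since $\mathbf{A}$-filters are $\wedge$-preserving), and then $\descr{\phi_1\wedge\phi_2}=\val{\phi_1\wedge\phi_2}^{\uparrow}=g_{\phi_1\wedge\phi_2}^{\uparrow}=h_{\phi_1\wedge\phi_2}$; the case $\phi=\phi_1\vee\phi_2$ is dual ($\mathbf{A}$-ideals are $\vee$-reversing). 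For $\phi=\Box\psi$ the induction hypothesis gives $\descr\psi=h_\psi$, hence $\val{\Box\psi}(f)=R^{(0)}_\Box[h_\psi](f)=\bigwedge_i\big(i(\psi)\to R_\Box(f,i)\big)$; using the second presentation $R_\Box(f,i)=\bigvee_\chi(f(\Box\chi)\otimes i(\chi))$ from Definition~\ref{def:canonical frame} (cf.~Lemma~\ref{eq:premagicnew2}), the inequality $\geq f(\Box\psi)$ is immediate (take $\chi=\psi$) and $\leq f(\Box\psi)$ follows by instantiating $i:=i_\psi$, noting $R_\Box(f,i_\psi)=\bigvee_{\chi\vdash\psi}f(\Box\chi)=f(\Box\psi)$ because $\chi\vdash\psi$ implies $\Box\chi\vdash\Box\psi$ and $f$ is order-preserving; the degenerate subcase $\top\vdash\psi$ (where $i_\psi$ is not proper) is closed using the axiom $\top\vdash\Box\top$, which gives $f(\Box\psi)=1$. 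Then $\descr{\Box\psi}=\val{\Box\psi}^{\uparrow}=h_{\Box\psi}$ by the auxiliary fact. The case $\phi=\Diamond\psi$ is dual: $\descr{\Diamond\psi}(i)=R^{(0)}_\Diamond[g_\psi](i)=\bigwedge_f(f(\psi)\to R_\Diamond(i,f))$ is shown to equal $i(\Diamond\psi)$ via $R_\Diamond(i,f)=\bigvee_\chi(f(\chi)\otimes i(\Diamond\chi))$, the witness $f_\psi$, the rule from $\psi\vdash\chi$ to $\Diamond\psi\vdash\Diamond\chi$, and (in the degenerate subcase $\psi\vdash\bot$) the axiom $\Diamond\bot\vdash\bot$; then $\val{\Diamond\psi}=\descr{\Diamond\psi}^{\downarrow}=g_{\Diamond\psi}$.

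\emph{Main obstacle.} The only genuinely delicate points are the degenerate subcases in which the relevant subformula is provably equivalent to $\bot$ or to $\top$: there the principal witness filter/ideal ceases to be proper, so it cannot be plugged into the defining infima (which range over \emph{proper} $\mathbf{A}$-filters and ideals), and instead both sides have to be evaluated directly from properness and, in the modal clauses, from the modal axioms $\top\vdash\Box\top$ and $\Diamond\bot\vdash\bot$. Everything else --- in particular selecting the appropriate one of the two equivalent presentations of $R_\Box$ and $R_\Diamond$, and the repeated appeals to residuation and frame-distributivity of $\mathbf{A}$ --- is routine.
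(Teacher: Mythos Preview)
Your proof is correct and, in one respect, more careful than the paper's. The difference in organisation is worth noting. The paper proves both identities $\val{\phi}(f)=f(\phi)$ and $\descr{\phi}(i)=i(\phi)$ by a direct computation in \emph{every} inductive case: for each connective it first verifies the ``natural'' coordinate (e.g.\ $\val{\phi_1\wedge\phi_2}=g_{\phi_1\wedge\phi_2}$) and then re-runs essentially the same principal-filter/principal-ideal argument for the other coordinate (e.g.\ exhibiting $f_{\phi_1\wedge\phi_2}$ to bound $\descr{\phi_1\wedge\phi_2}$). You instead factor that repeated argument out once and for all into the auxiliary fact that $(g_\theta,h_\theta)$ is a formal $\mathbf{A}$-concept for \emph{every} $\theta$, after which each inductive step only needs the ``natural'' coordinate and one application of $(\cdot)^{\uparrow}$ or $(\cdot)^{\downarrow}$. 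This halves the case analysis at no cost, since the auxiliary fact is literally the generalisation to arbitrary $\theta$ of the lemma the paper already proves for atomic $p$.

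You are also more explicit than the paper about the degenerate subcases where the principal witness $f_\theta$ or $i_\theta$ fails to be proper (i.e.\ when $[\theta]=[\bot]$ or $[\theta]=[\top]$, and in the modal clauses when $[\psi]=[\top]$ for $\Box$ or $[\psi]=[\bot]$ for $\Diamond$). The paper silently uses $i_\psi$ and $f_\psi$ as if they were always in $\mathsf{I}_{\mathbf{A}}(\mathbf{Fm})$ resp.\ $\mathsf{F}_{\mathbf{A}}(\mathbf{Fm})$; your handling of these cases via properness and the axioms $\top\vdash\Box\top$, $\Diamond\bot\vdash\bot$ is the right patch.
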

\begin{proof}
We proceed by  induction on $\phi$. If $\phi: = p\in \mathsf{Prop}$, the statement follows immediately from  Definition \ref{def:canonical model}. 

If $\phi: =\top$, then $\val{\top}(f) = 1 = f(\top)$ since $\mathbf{A}$-filters are $\top$-preserving. Moreover,
\begin{center}
\begin{tabular}{r cl}
 $\descr{\top}(i)$& $=$& $\val{\top}^{\uparrow}(i)$\\
 & $=$& $ \bigwedge_{f\in \mathsf{F}_{\mathbf{A}}(\mathbf{Fm})} [\val{\top}(f)\to I(f, i)]$\\
  & $=$& $ \bigwedge_{f\in \mathsf{F}_{\mathbf{A}}(\mathbf{Fm})} [f(\top)\to I(f, i)]$\\
  & $=$& $ \bigwedge_{f\in \mathsf{F}_{\mathbf{A}}(\mathbf{Fm})} [I(f,i)]$. \\
  \end{tabular}
 \end{center}
To show that $\descr{\top}(i)\leq  i(\top)=0$, i.e.~that \[\bigwedge_{f\in \mathsf{F}_{\mathbf{A}}(\mathbf{Fm})} I(f, i)\leq 0,\]
it is enough  to find some $f\in \mathsf{F}_{\mathbf{A}}(\mathbf{Fm})$ such that  $I(f,i)=0$. Let  $f_\top:\mathbf{Fm}\to \mathbf{A}$ be defined by the assignment 
\[
f_\top(\phi) = \left\{\begin{array}{ll}
1 & \text{if }  \top\vdash \phi\\
0 & \text{otherwise. } \\
\end{array}\right.
\]\
By definition, $I(f_{\top}, i) = \bigvee_{\psi\in \mathbf{Fm}}[f_{\top}(\psi)\otimes i(\psi)] =    \bigvee_{\top\vdash \psi}i(\psi)
\leq i(\top)$, the last inequality being due to the fact that  $i$ is order-reversing. 
Hence, since $i$ is $\top$-reversing, $I(f_{\top}, i)\leq i(\top)=0$, as required.

If $\phi: =\bot$, then $\descr{\bot}(i) = 1 = i(\bot)$ since $\mathbf{A}$-ideals are $\bot$-reversing. Let us show that $\val{\bot}(f) = f(\bot)$. The inequality $f(\bot)\leq \val{\bot}(f) $ follows immediately from the fact that $f$ is proper and hence that $f(\bot) = 0$. To show that $\val{\bot}(f) \leq f(\bot)$, by definition  $\val{\bot}(f)=\descr{\bot}^{\downarrow}(f) =  \bigwedge_{i\in \mathsf{I}_{\mathbf{A}}(\mathbf{Fm})} [(i(\bot))\to I(f, i)] = \bigwedge_{i\in \mathsf{I}_{\mathbf{A}}(\mathbf{Fm})} I(f, i)$,
%
hence, it is enough to find some $i\in \mathsf{I}_{\mathbf{A}}(\mathbf{Fm})$ such that  \begin{equation}\label{eqqqqqq}I(f, i)=0.\end{equation}
Let  $i_{\bot}:\mathbf{Fm}\to \mathbf{A}$ be defined by the assignment 
	\[
i_{\bot}(\psi) = \left\{\begin{array}{ll}
1 & \text{if } \psi\vdash \bot\\
0 & \text{if }\psi\not\vdash \bot.
\end{array}\right.
\]
By definition and since $f$ is order-preserving and $\bot$-preserving, $I(f, i_{\bot}) = \bigvee_{\psi\in \mathbf{Fm}}[f(\psi)\otimes i_{\bot}(\psi)] = \bigvee_{\psi\vdash\bot}f(\psi)= f(\bot)=0$. 

If  $\phi: = \phi_1\wedge\phi_2$, then $\val{\phi_1\wedge\phi_2}(f) = (\val{\phi_1} \wedge \val{\phi_2})(f) = \val{\phi_1}(f) \wedge \val{\phi_2}(f) = f(\phi_1) \wedge f(\phi_2) = f(\phi_1 \wedge \phi_2)$. 
Let us  show that $\descr{\phi_1 \wedge \phi_2}(i) = i(\phi_1 \wedge \phi_2)$. By definition,
\begin{center}
\begin{tabular}{cl}
& $\descr{\phi_1 \wedge \phi_2}(i)$\\
= & $\val{\phi_1 \wedge \phi_2}^{\uparrow}(i)$\\
= & $\bigwedge_{f\in \mathsf{F}_{\mathbf{A}}(\mathbf{Fm})}[\val{\phi_1 \wedge \phi_2}(f)\to I(f, i)]$\\
= & $\bigwedge_{f\in \mathsf{F}_{\mathbf{A}}(\mathbf{Fm})}[f(\phi_1 \wedge \phi_2)\to I(f, i)]$.\\
\end{tabular}
\end{center}
Hence,  to show that $i(\phi_1 \wedge \phi_2)\leq \descr{\phi_1 \wedge \phi_2}(i)$, we need to show that for every $f\in \mathsf{F}_{\mathbf{A}}(\mathbf{Fm})$,
\[i(\phi_1 \wedge \phi_2)\leq f(\phi_1\wedge \phi_2)\to I(f, i).\]
Since by definition $I(f, i) = \bigvee_{\phi\in \mathbf{Fm}}[f(\phi)\otimes i(\phi)]\geq f(\phi_1 \wedge \phi_2)\otimes i(\phi_1 \wedge \phi_2)$ and $\to$ is order-preserving in the second coordinate, 
it is enough to show that for every $f\in \mathsf{F}_{\mathbf{A}}(\mathbf{Fm})$,
\begin{center}
\begin{tabular}{cl}
& $i(\phi_1 \wedge \phi_2)\leq f(\phi_1\wedge \phi_2)\to f(\phi_1 \wedge \phi_2)\otimes i(\phi_1 \wedge \phi_2)$.\\
\end{tabular}
\end{center}

By residuation, the above inequality is equivalent to
\begin{center}
\begin{tabular}{cl}
& $i(\phi_1 \wedge \phi_2)\otimes f(\phi_1\wedge \phi_2)\leq f(\phi_1 \wedge \phi_2)\otimes i(\phi_1 \wedge \phi_2),$\\
\end{tabular}
\end{center}
which is an instance of a tautology in residuated lattices.

To show that $\descr{\phi_1 \wedge \phi_2}(i)\leq i(\phi_1 \wedge \phi_2)$, it is enough to find some $f\in \mathsf{F}_{\mathbf{A}}(\mathbf{Fm})$ such that 
\[f(\phi_1 \wedge \phi_2)\to I(f, i)\leq i(\phi_1 \wedge \phi_2).\]
Let  $f_{\phi_1 \wedge \phi_2}:\mathbf{Fm}\to \mathbf{A}$ be defined by 
\[
f_{\phi_1 \wedge \phi_2}(\psi) = \left\{\begin{array}{ll}
1 & \text{if }  \phi_1 \wedge \phi_2\vdash \psi\\
0 & \text{otherwise. } \\
\end{array}\right.
\]
The inequality above then becomes
\[I(f_{\phi_1 \wedge \phi_2}, i)\leq i(\phi_1 \wedge \phi_2).\]
Indeed, 
by definition, $I(f_{\phi_1 \wedge \phi_2}, i) =  \bigvee_{\psi\in \mathbf{Fm}}[f_{\phi_1 \wedge \phi_2}(\psi)\otimes i(\psi)] =  \bigvee_{\phi_1 \wedge \phi_2\vdash \psi}[1\otimes i(\psi)]=  \bigvee_{\phi_1 \wedge \phi_2\vdash \psi}i(\psi)\leq i(\phi_1 \wedge \phi_2)$, the last inequality being due to the fact that  $i$ is order-reversing.

If  $\phi: = \phi_1\vee\phi_2$, then $\descr{\phi_1\vee\phi_2}(i) = (\descr{\phi_1} \wedge \descr{\phi_2})(i) = \descr{\phi_1}(i) \wedge \descr{\phi_2}(i) = i(\phi_1) \wedge i(\phi_2) =  i(\phi_1 \vee \phi_2)$. 
Let us  show that $\val{\phi_1 \vee \phi_2}(f) = f(\phi_1 \vee \phi_2)$. By definition,
\begin{center}
\begin{tabular}{cl}
& $\val{\phi_1 \vee \phi_2}(f)$\\
= & $\descr{\phi_1 \vee \phi_2}^{\downarrow}(f)$\\
= & $\bigwedge_{i\in \mathsf{I}_{\mathbf{A}}(\mathbf{Fm})}[\descr{\phi_1 \vee \phi_2}(i)\to I(f, i)]$\\
= & $\bigwedge_{i\in \mathsf{I}_{\mathbf{A}}(\mathbf{Fm})}[i(\phi_1 \vee \phi_2)\to I(f,i)]$.\\
\end{tabular}
\end{center}
Hence,  to show that $f(\phi_1 \vee \phi_2)\leq \val{\phi_1 \vee \phi_2}(f)$, we need to show that for every $i\in \mathsf{I}_{\mathbf{A}}(\mathbf{Fm})$,
\[f(\phi_1 \vee \phi_2)\leq i(\phi_1 \vee \phi_2)\to I(f, i).\]
Since by definition $I(f, i) = \bigvee_{\psi\in \mathbf{Fm}}[f(\psi)\otimes i(\psi)]\geq f(\phi_1 \vee \phi_2)\otimes i(\phi_1 \vee \phi_2)$ and $\to$ is order-preserving in the second coordinate, 
it is enough to show that for every $i\in \mathsf{I}_{\mathbf{A}}(\mathbf{Fm})$,
\begin{center}
\begin{tabular}{cl}
& $f(\phi_1 \vee \phi_2)\leq i(\phi_1 \vee \phi_2)\to f(\phi_1 \vee \phi_2)\otimes i(\phi_1 \vee \phi_2)$.\\
\end{tabular}
\end{center}
By residuation the inequality above is equivalent to
\begin{center}
\begin{tabular}{cl}
& $ f(\phi_1 \vee \phi_2)\otimes  i(\phi_1 \vee \phi_2)\leq f(\phi_1 \vee \phi_2)\otimes i(\phi_1 \vee \phi_2)$,\\
\end{tabular}
\end{center}
which is a tautology in residuated lattices.

To show that $\val{\phi_1 \vee \phi_2}(f)\leq f(\phi_1 \vee \phi_2)$, it is enough to find some $i\in \mathsf{I}_{\mathbf{A}}(\mathbf{Fm})$ such that 
\begin{equation}
\label{eqqqqq}
i(\phi_1 \vee \phi_2)\to I(f, i)\leq f(\phi_1 \vee \phi_2).\end{equation}
Let  $i_{\phi_1 \vee \phi_2}:\mathbf{Fm}\to \mathbf{A}$ be defined by the assignment 
\[
i_{\phi_1 \vee \phi_2}(\psi) = \left\{\begin{array}{ll}
1 & \text{if }  \psi\vdash \phi_1 \vee \phi_2 \\
0 & \text{if }\psi\not\vdash \phi_1 \vee \phi_2.
\end{array}\right.
\]
By definition and since $f$ is order-preserving and proper, $I(f, i_{\phi_1\vee\phi_2}) = \bigvee_{\psi\in \mathbf{Fm}}[f(\psi)\otimes i_{\phi_1 \vee \phi_2}(\psi)] = \bigvee_{\psi\vdash\phi_1 \vee \phi_2} [f(\psi)\otimes 1] = f(\phi_1\vee\phi_2)$. Hence, \eqref{eqqqqq} can be rewritten as follows:
\[1\to  f(\phi_1 \vee \phi_2)\leq f(\phi_1 \vee \phi_2),\]
which is true.

Assume $\phi: = \Diamond\psi$. We first show that $\descr{\Diamond\psi} (i) = i(\Diamond \psi)$. By definition,
\begin{center}
\begin{tabular}{rcl}
$\descr{\Diamond\psi} (i)$ &
 = & $R^{(0)}_\Diamond[\val{\psi}](i)$\\
&= & $\bigwedge_{f\in \mathsf{F}_{\mathbf{A}}(\mathbf{Fm})}[\val{\psi}(f)\to  R_{\Diamond}(i, f)]$\\
&= & $\bigwedge_{f\in \mathsf{F}_{\mathbf{A}}(\mathbf{Fm})}[f(\psi)\to R_{\Diamond}(i, f) ]$,\\
\end{tabular}
\end{center}
Hence,  to show that $i(\Diamond\psi)\leq \descr{\Diamond\psi}(i)$, we need to show that for every $f\in \mathsf{F}_{\mathbf{A}}(\mathbf{Fm})$,
\[i(\Diamond\psi)\leq f(\psi)\to R_{\Diamond}(i, f).\]
By definition we have  $R_\Diamond(i, f) = \bigvee_{\phi\in \mathbf{Fm}}(f(\phi) \otimes i(\Diamond\phi)) 
\geq f(\psi)\otimes i(\Diamond \psi)$, 
and since $\to$ is order-preserving in the second coordinate, 
it is enough to show that for every $f\in \mathsf{F}_{\mathbf{A}}(\mathbf{Fm})$,
\[i(\Diamond\psi)\leq f(\psi)\to (f(\psi)\otimes i(\Diamond \psi)).\]
By residuation the inequality above is equivalent to
\[ i(\Diamond \psi)\otimes f(\psi)\leq f(\psi)\otimes (\Diamond\psi),\]
which is a tautology in residuated lattices.

To show that $\descr{\Diamond\psi}(i)\leq i(\Diamond\psi)$, it is enough to find some $f\in \mathsf{F}_{\mathbf{A}}(\mathbf{Fm})$ such that
\begin{equation}
\label{eqqqqqqq}
f(\psi)\to R_{\Diamond}(i, f)\leq i(\Diamond\psi).\end{equation}
Let  $f_{\psi}:\mathbf{Fm}\to \mathbf{A}$ be defined by
\[
f_{\psi}(\phi) = \left\{\begin{array}{ll}
1 & \text{if }  \psi\vdash \phi\\
0 & \text{otherwise. } \\
\end{array}\right.
\]
By definition and Lemma \ref{eq:premagicnew2},
\begin{center}
\begin{tabular}{rcl}
$R_\Diamond(i, f_{\psi})$ &  = &$\bigvee_{\phi\in \mathbf{Fm}}(f_{\psi}^{-\Diamond}(\phi)\otimes  i(\phi)) $\\
&=&$ \bigvee_{\phi\in \mathbf{Fm}}(f_{\psi}(\phi)\otimes  i(\Diamond \phi))$\\
&=&$ \bigvee_{\psi\vdash \phi} i(\Diamond \phi)$\\
&$\leq$&$  i(\Diamond \psi)$,\\
\end{tabular}
\end{center}
the last inequality being due to the fact that $i$ and $\Diamond$ are order-reversing and order-preserving respectively. Since $\to$ is order-preserving in the second coordinate, to show that \eqref{eqqqqqqq} holds, it is enough to show that 
\[f_{\psi}(\psi)\to  i(\Diamond \psi)\leq i(\Diamond\psi).\]
This immediately follows from the fact that, by construction,  $f_{\psi}(\psi) = 1$.

Let us now show that $\val{\Diamond\psi}(f) = f(\Diamond\psi)$. By definition,
\begin{center}
\begin{tabular}{cl}
&$\val{\Diamond\psi} (f)$\\
  = & $\descr{\Diamond\psi}^{\downarrow}(f)$\\ 
 = & $\bigwedge_{i\in \mathsf{I}_{\mathbf{A}}(\mathbf{Fm})}[\descr{\Diamond\psi}(i)\to I(f, i)]$\\
 = & $\bigwedge_{i\in \mathsf{I}_{\mathbf{A}}(\mathbf{Fm})}[i(\Diamond\psi)\to I(f, i)].$\\
\end{tabular}
\end{center}
Hence,  to show that $f(\Diamond\psi)\leq \val{\Diamond\psi}(f)$, we need to show that for every $i\in \mathsf{I}_{\mathbf{A}}(\mathbf{Fm})$,
\[f(\Diamond\psi)\leq i(\Diamond\psi)\to I(f, i).\]
Since by definition $I(f, i) = \bigvee_{\phi\in \mathbf{Fm}}[f(\phi)\otimes i(\phi)]\geq f(\Diamond\psi)\otimes i(\Diamond\psi)$ and $\to$ is order-preserving in the second coordinate, 
it is enough to show that for every $i\in \mathsf{I}_{\mathbf{A}}(\mathbf{Fm})$,
\[f(\Diamond\psi)\leq i(\Diamond\psi)\to (f(\Diamond\psi)\otimes i(\Diamond\psi)),\]
which holds in all residuated lattices.

To show that $\val{\Diamond\psi}(f)\leq f(\Diamond\psi)$, it is enough to find  some $i\in \mathsf{I}_{\mathbf{A}}(\mathbf{Fm})$ such that 
\begin{equation}
\label{eqqqqqqqq}u(\Diamond\psi)\to I(f, i)\leq f(\Diamond\psi).\end{equation}
Let  $i_{\Diamond\psi}:\mathbf{Fm}\to \mathbf{A}$ be defined by the assignment 
	\[
i_{\Diamond\psi}(\phi) = \left\{\begin{array}{ll}

1 & \text{if }   \phi\vdash \Diamond\psi \\
0 & \text{if }\phi\not\vdash \Diamond\psi.
\end{array}\right.
\]
By definition and since $f$ is order-preserving and proper, $I(f, i_{\Diamond\psi}) = \bigvee_{\phi\in \mathbf{Fm}}[f(\phi)\otimes i_{\Diamond\psi}(\phi)] = \bigvee_{\phi\vdash\Diamond\psi} [f(\phi)\otimes 1 ] = f(\Diamond\psi)$. Hence, \eqref{eqqqqqqqq} can be rewritten as follows:
\[u_{\Diamond\psi}(\Diamond\psi)\to f(\Diamond\psi)\leq f(\Diamond\psi),\]
which is true since $i_{\Diamond\psi}(\Diamond\psi)=1$.

Assume $\phi:=\Box \psi$. We first show that $\val{\Box \psi}(f)=f(\Box \psi)$. By definition:
\begin{center}
\begin{tabular}{rcl}
$\val{\Box\psi} (f)$ & = & $R^{(0)}_\Box[\descr{\psi}](f)$\\
& = & $\bigwedge_{i \in \mathsf{I}_{\mathbf{A}}(\mathbf{Fm})} [\descr{\psi}(i) \to R_{\Box}(f,i)]$\\
& = & $\bigwedge_{i \in \mathsf{I}_{\mathbf{A}}(\mathbf{Fm})} [i(\psi) \to R_{\Box}(f,i)]$
\end{tabular}
\end{center}
Hence to show that $f(\Box \psi) \leq \val{\Box \psi}(f)$ we must show that for every $i \in \mathsf{I}_{\mathbf{A}}(\mathbf{Fm})$ we have
\[f(\Box \psi) \leq i(\psi) \to R_{\Box}(f,i).\] 

We have 
$R_{\Box}(f,i)=\bigvee_{\phi \in \mathbf{Fm}} (f(\Box \phi) \otimes i(\phi))$ so 
$R_{\Box}(f,i)\geq f(\Box \psi) \otimes i(\psi)$. 
Since $\to$ is order-preserving in the second coordinate, it will be enough to show that 
\[ f(\Box \psi) \leq i(\psi) \to (f(\Box \psi) \otimes i(\psi)).\]
Using residuation we can see that this is an instance of a tautology in residuated lattices. 

To show $\val{\Box \psi}(f)\leq f(\Box \psi)$ we must find $i \in \mathsf{I}_{\mathbf{A}}(\mathbf{Fm})$ such that 

\begin{equation}\label{eq:Box1}
 i(\psi) \to R_{\Box}(f,i) \leq f(\Box \psi).
\end{equation}
Let  $i_{\psi}:\mathbf{Fm}\to \mathbf{A}$ be defined by the assignment 
	\[
i_{\psi}(\chi) = \left\{\begin{array}{ll}
1 & \text{if }   \chi\vdash  \psi \\
0 & \text{if }\chi\not\vdash \psi.
\end{array}\right.
\]
By definition, and since $f$ is order-preserving, $R_{\Box}(f, i_{\psi}) = \bigvee_{\chi\in\mathbf{Fm}}(f(\Box\chi)\otimes i(\chi)) = \bigvee_{\chi\vdash \psi}(f(\Box\chi)\otimes 1) = f(\Box\psi)$. Hence, \eqref{eq:Box1} can be rewritten as follows:
\[i_{\psi}(\psi)\to f(\Box\psi) \leq f(\Box \psi),\]
which is a tautology since $i(\psi)=1$. 

Next, we want to show that $\descr{\Box \psi}(i)=i(\Box\psi)$. By definition,
\begin{center}
\begin{tabular}{rcl}
$\descr{\Box\psi} (i)$ & = & $\val{\Box\psi}^{\uparrow}(i)$\\
& = & $\bigwedge_{f \in \mathsf{F}_{\mathbf{A}}(\mathbf{Fm})} [\val{\Box \psi}(f) \to I(f,i)]$\\
& = & $\bigwedge_{i \in \mathsf{F}_{\mathbf{A}}(\mathbf{Fm})} [f(\Box \psi) \to I(f,i)].$
\end{tabular}
\end{center}
To show that $\descr{\Box \psi}(i) \leq i(\Box\psi)$ we just need to find $f \in \mathsf{F}_{\mathbf{A}}(\mathbf{Fm})$ such 
that $f(\Box \psi) \to I(f,i) \leq i(\Box \psi)$. Define $f_{\Box \psi} \colon \mathbf{Fm} \to \mathbf{A}$
by
\[ f_{\Box \psi}(\chi) = \begin{cases} 1 & \text{ if } \Box \psi \vdash \chi \\ 0 & \text{otherwise.}\end{cases} \]
Clearly $f_{\Box\psi}(\Box \psi)=1$ and so 
$f_{\Box\psi}(\Box \psi) \to I(f_{\Box\psi},i) = I(f_{\Box\psi},i)$. Now,
\begin{center}
\begin{tabular}{rcl}
$I(f_{\Box\psi},i)$ & = & $\bigvee_{\phi \in \mathbf{Fm}} (f_{\Box\psi}(\phi) \otimes i(\phi))$ \\ 
& = & $\bigvee_{\Box \psi \vdash \chi} (f_{\Box\psi}(\chi) \otimes i(\chi))$ \\
& = & $\bigvee_{\Box \psi \vdash \chi} (1\otimes i(\chi)) $ \\
& = & $\bigvee_{\Box \psi \vdash \chi} i(\chi) $ \\
& $\leq$ & $i(\Box \psi)$.
\end{tabular}
\end{center}
The last inequality follows from the fact that $i$ is order-reversing. Hence,
\[f_{\Box\psi}(\Box \psi) \to I(f_{\Box\psi},i) = I(f_{\Box\psi},i)\leq i(\Box \psi).\]
 
To show that $ i(\Box\psi)\leq \descr{\Box \psi}(i)$, we must show that 
for all $f \in \mathsf{F}_{\mathbf{A}}(\mathbf{Fm})$ we have $i(\Box\psi)\leq f(\Box \psi) \to I(f,i)$. By definition 
we have $I(f,i)=\bigvee_{\phi \in \mathbf{Fm}} (f(\phi) \otimes i(\phi)) \geq f(\Box \psi)\otimes i(\Box \psi)$. 
Therefore, the desired inequality will follow if we can show 
\[
i(\Box\psi) \leq f(\Box \psi) \to (f(\Box \psi)\otimes i(\Box \psi)),
\]
which is an instance of a tautology in residuated lattices. 
\end{proof}


\begin{theorem} \label{thm:completeness}
The basic normal $\mathcal{L}$-logic $\mathbf{L}$ is sound and complete w.r.t.~the class of polarity-based $\mathbf{A}$-frames. 
\end{theorem}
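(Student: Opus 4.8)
The plan is to prove soundness and completeness separately, leveraging the machinery already in place: the complex-algebra construction (Lemma~\ref{prop:fplus}) for soundness, and the canonical model together with the Truth Lemma (Lemma~\ref{lemma:truthlemma}) for completeness.

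For soundness, fix a polarity-based $\mathbf{A}$-frame $\mathbb{F}$ and a model $\mathbb{M} = (\mathbb{F}, V)$ on it. By construction the extended valuation $V\colon \mathcal{L} \to \mathbb{F}^+$ is a homomorphism of $\mathcal{L}$-algebras, and, by Definition~\ref{def:Sequent:True:In:Model}, $\mathbb{M} \models \phi \vdash \psi$ holds iff $V(\phi) \leq V(\psi)$ in $\mathbb{F}^+$. By Lemma~\ref{prop:fplus}, $\mathbb{F}^+$ is a complete normal lattice expansion in which $[R_\Box]$ is completely meet-preserving and $\langle R_\Diamond\rangle$ is completely join-preserving; in particular $[R_\Box]$ preserves the empty meet (so $\top \leq [R_\Box]\top$) and binary meets (so $[R_\Box]a \wedge [R_\Box]b = [R_\Box](a\wedge b)$), $[R_\Box]$ is monotone, and dually for $\langle R_\Diamond\rangle$. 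Hence every axiom of $\mathbf{L}$ is satisfied by $\mathbb{F}^+$ under any homomorphism, and every rule of $\mathbf{L}$ preserves satisfaction (cut is transitivity of $\leq$; uniform substitution is sound because $V$ ranges over all valuations; the $\wedge$- and $\vee$-rules hold in any lattice; the monotonicity rules for $\Box,\Diamond$ follow from monotonicity of $[R_\Box]$ and $\langle R_\Diamond\rangle$). Equivalently, one may simply invoke the known soundness of $\mathbf{L}$ with respect to the class of all normal lattice expansions \cite{CoPa:non-dist} together with the fact that $\mathbb{F}^+$ belongs to that class. Either way, $\mathbf{L} \vdash \phi\vdash\psi$ implies $\mathbb{F} \models \phi \vdash \psi$.

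For completeness I would argue contrapositively. Suppose $\mathbf{L} \not\vdash \phi\vdash\psi$; then in the Lindenbaum--Tarski algebra $\mathbf{Fm}$ we have $\phi \not\vdash \psi$, and in particular $\phi \not\vdash \bot$ (otherwise $\phi\vdash\bot\vdash\psi$ would be derivable). Consider the canonical model $\mathbb{M} = (\mathbb{P}, V)$ of Definition~\ref{def:canonical model}, which is a polarity-based $\mathbf{A}$-model by the preceding lemmas. Define $f_\phi \colon \mathbf{Fm} \to \mathbf{A}$ by $f_\phi(\chi) = 1$ if $\phi \vdash \chi$ and $f_\phi(\chi) = 0$ otherwise. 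Since $\phi \vdash \top$ always, $\phi\vdash\chi_1\wedge\chi_2$ iff $\phi\vdash\chi_1$ and $\phi\vdash\chi_2$, and $\phi\not\vdash\bot$, the map $f_\phi$ is $\top$-, $\wedge$- and $\bot$-preserving, i.e.\ a proper $\mathbf{A}$-filter, so $f_\phi \in \mathsf{F}_{\mathbf{A}}(\mathbf{Fm})$. By the Truth Lemma, $\val{\phi}(f_\phi) = f_\phi(\phi) = 1$, whereas $\val{\psi}(f_\phi) = f_\phi(\psi) = 0$ because $\phi \not\vdash \psi$. Hence $\val{\phi} \not\subseteq \val{\psi}$, so $\mathbb{M} \not\models \phi\vdash\psi$, and therefore $\phi \vdash \psi$ is not valid in the polarity-based $\mathbf{A}$-frame $\mathbb{P}$; a fortiori it is not valid in the class of all polarity-based $\mathbf{A}$-frames. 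This yields completeness.

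The genuine mathematical content has already been discharged in the lemmas preceding the theorem --- chiefly the verification that the canonical relations $R_\Box, R_\Diamond$ are $I$-compatible (so that $\mathbb{P}$ really is a frame) and the modal cases of the Truth Lemma, which rely on Lemma~\ref{eq:premagicnew2} and Lemma~\ref{lemma:f minus diam is filter}. What remains for the proof of the theorem itself is routine assembly: checking that $f_\phi$ is a proper $\mathbf{A}$-filter (the only point requiring a moment's care, handled by first disposing of the trivial case $\phi\vdash\bot$) and threading the Truth Lemma through Definition~\ref{def:Sequent:True:In:Model}. If any step deserves to be called the main obstacle it is the soundness half, in the sense that one must be explicit about which of the frame-algebraic identities of Lemma~\ref{prop:fplus} validate which axioms; but even this becomes immediate once the correspondence between $\mathbb{M} \models \phi\vdash\psi$ and $V(\phi) \leq V(\psi)$ in $\mathbb{F}^+$ is in hand.
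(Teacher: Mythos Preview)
Your proof is correct and follows essentially the same approach as the paper: build the canonical model, invoke the Truth Lemma, and evaluate at the principal proper filter $f_\phi$. Your completeness argument is in fact slightly more direct than the paper's---the paper also introduces the principal ideal $i_\psi$ and bounds $\val{\psi}(f_\phi)$ via $I(f_\phi,i_\psi)=0$, whereas you simply apply the Truth Lemma a second time to get $\val{\psi}(f_\phi)=f_\phi(\psi)=0$; you also treat soundness explicitly, which the paper's proof leaves implicit.
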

\begin{proof}
Consider an $\mathcal{L}$-sequent $\phi \vdash \psi$ that is not derivable in $\mathbf{L}$.
In order to show that $\mathbb{M} \models \phi \vdash \psi$, we
need to show that $\val{\phi}(f) \nleq \val{\psi}(f)$ for some $f \in \mathsf{F}_{\mathbf{A}}(\mathbf{Fm})$. 
 Consider the proper filter $f_{\phi}$ and proper ideal $i_{\psi}$ given by 
\[
f_{\phi}(\chi) = \left\{\begin{array}{ll}
1 &\text{if } \phi \vdash \chi\\
0 &\text{if } \phi \not \vdash \chi
\end{array} \right.
\]
and
\[
i_{\psi}(\chi) = \left\{\begin{array}{ll}
1 &\text{if } \chi \vdash \psi\\
0 &\text{if } \chi \not \vdash \psi.
\end{array} \right.
\]
Then $\bigvee_{\chi\in \mathbf{Fm}}(f_{\phi}(\chi)\otimes i_{\psi}(\chi)) = 0$, for else there would have to be a formula $\chi_0 \in \mathbf{Fm}$ such that $f_{\phi}(\chi_0) = 1$ and $i_{\psi}(\chi_0) = 1$, which would mean that $\phi \vdash \chi_0$ and $\chi_0 \vdash \psi$ and hence that $\phi \vdash \psi$, in contradiction with the assumption that $\phi \vdash \psi$ is not derivable. By the Truth Lemma, $\val{\phi}(f_\phi) = f_{\phi}(\phi) = 1$ and $\descr{\psi}(i_\psi) = i_{\psi}(\psi) = 1$, and so 
\begin{eqnarray*}
\val{\psi}(f_\phi) & = & (\descr{\psi})^\downarrow(f_\phi)\\
& = & \bigwedge_{i\in \mathsf{I}_{\mathbf{A}}(\mathbf{Fm})}(\descr{\psi}(i) \to I(f_\phi, i))\\
& \leq & (\descr{\psi}(i_\psi) \to I(f_\phi, i_\psi))\\
& = & i_\psi(\psi) \to I(f_\phi, i_\psi))\\
& = & 1 \to I(f_\phi, i_\psi))\\
& = & I(f_\phi, i_\psi))\\
& = & \bigvee_{\chi\in \mathbf{Fm}}(f_{\phi}(\chi)\otimes i_{\psi}(\chi))\\
& = & 0.
\end{eqnarray*}
So we conclude that  $\mathbb{M} \not \models \phi \vdash \psi$, as desired.
\end{proof}

\end{document}